\providecommand{\U}[1]{\protect\rule{.1in}{.1in}}
\newtheorem{theorem}{Theorem}[section]
\newtheorem{corollary}[theorem]{Corollary}
\newtheorem{example}[theorem]{Example}
\newtheorem{lemma}[theorem]{Lemma}
\newtheorem{remark}[theorem]{Remark}
\newtheorem{assumption}[theorem]{Assumption}
\newcommand{\BIGOP}[1]{\mathop{\mathchoice{\raise-0.22em\hbox{\huge
$#1$}} {\raise-0.05em\hbox{\Large $#1$}}{\hbox{\large $#1$}}{#1}}}
\newenvironment{proof}[1][Proof]{\textbf{#1.} }{\ \rule{0.5em}{0.5em}}
\makeatletter\@addtoreset{equation}{section}\makeatother
\newdimen\dummy
\begin{document}

\title{On some nonlocal, nonlinear diffusion problems}
\author{M.M. Chipot\thanks{(m.m.chipot@math.uzh.ch), Institut f\"{u}r Mathematik,
Universit\"{a}t Z\"{u}rich, Winterthurerstr 190, CH-8057 Z\"{u}rich,
Switzerland}
\and A. Luthra\thanks{(aryan.luthra@uzh.ch), Institut f\"{u}r Mathematik,
Universit\"{a}t Z\"{u}rich, Winterthurerstr 190, CH-8057 Z\"{u}rich,
Switzerland }
\and S.A. Sauter\thanks{(stas@math.uzh.ch), Institut f\"{u}r Mathematik,
Universit\"{a}t Z\"{u}rich, Winterthurerstr 190, CH-8057 Z\"{u}rich,
Switzerland}}
\maketitle

\begin{abstract}
This note is devoted to some nonlocal, nonlinear elliptic problems with an
emphasis on the computation of the solution of such problems, reducing it in
particular to a fixed point argument in $\mathbb{R}$. Errors estimates and
numerical experiments are provided.

\end{abstract}

\noindent\emph{AMS Subject Classification: } 35J15, 35J25, 35J66, 65N15, 65N22.

\noindent\emph{Key Words:} Elliptic problems, diffusion, nonlocal, numerical
computations, error estimates.

\section{Introduction}

It is well known that the density of a population $u=u(t,x)$ diffusing in a
domain $\Omega$ satisfies the equation
\begin{equation}%
\begin{cases}
\partial_{t}u-a\Delta u+\lambda u=F\text{ in }\Omega,\cr u(0,x)=u_{0}(x)\text{
in }\Omega\cr
\end{cases}
\label{1.1}%
\end{equation}
together with a boundary condition, for instance
\begin{equation}
u(t,x)=0,x\in\partial\Omega\label{1.2}%
\end{equation}
if the boundary of the domain is inhospitable. In the above, $\partial\Omega$
denotes the boundary of $\Omega$, $u_{0}$ the initial density of population at
time $t=0$, $a$ the diffusion coefficient, $\lambda$ the death rate of the
species at stake and $F$ its supply. To consider $a$ and $\lambda$ constant is
a first approximation and it is more realistic to have these coefficients
depending on $x$. Furthermore the amount of individuals might have some
influence on the decision to move and thus a more realistic model would
request to have
\begin{equation}
a=a(x,u),~~\lambda=\lambda(x,u). \label{1.3}%
\end{equation}
In this definition the dependence in $u$ is not very explicit. For instance
one could have
\begin{equation}
a=a(x,u(x)),~~\lambda=\lambda(x,u(x)), \label{1.4}%
\end{equation}
and these coefficients would depend then from the population $u(x)$ at the
location $x$. Again, this might lack of realism and, for instance, the whole
population might influence the mobility of the crowd in such a way that,
dropping the measure of integration,
\begin{equation}
a=a(x,\int_{\Omega}u),~~\lambda=\lambda(x,\int_{\Omega}u). \label{1.5}%
\end{equation}
This corresponds to a so called nonlocal problem whereas \eqref{1.4} is a
local assumption. For the study of such problems, where $u$ could be some
other physical quantity, the reader is referred to \cite{CNew}, \cite{DL},
\cite{Evans}, \cite{QS}.

When the population has reached stability -i.e. when $\partial_{t}u=0$- the
equations reduce to
\begin{equation}%
\begin{cases}
-a(x,u)\Delta u+\lambda(x,u)u=F\text{ in }\Omega,\cr u(x)=0,x\in\partial
\Omega.\cr
\end{cases}
\label{1.6}%
\end{equation}
These are problems that we would like to investigate from the point of view of
existence and uniqueness of a solution oriented toward its computation. More
abstract results can be found in \cite{CR}, where multivalued $\ell$ are
considered, or in \cite{C1}.

The paper is organised as follows. In the next section we address the issue of
existence of a solution for coefficients of the type \eqref{1.5} showing in
particular its equivalence to a fixed point for an equation in $\mathbb{R}$.
In section 3 we treat the case when the dependence of $a$ and $\lambda$ is
less explicit in the spirit of \eqref{1.3}. The section 4 addresses the issue
of numerical computations, section 5 develops a corresponding error analysis
and finally the last section is devoted to numerical experiments. In the
appendix, we analyse a modified fixed point iteration with improved
convergence properties under certain assumptions.

\section{A first class of elliptic problems}

Let $\Omega\subset\mathbb{R}^{n}$ be a bounded Lipschitz domain with boundary
$\partial\Omega$. The Euclidean scalar product in $\mathbb{R}^{n}$ is denoted
by $\left\langle \cdot,\cdot\right\rangle $, the Euclidean norm by $\left\vert
\cdot\right\vert $, and the $L^{2}\left(  \Omega\right)  $ scalar product by
$\left(  \cdot,\cdot\right)  _{\Omega}$. We use the standard notation for the
Sobolev space $H^{1}\left(  \Omega\right)  $ and the subset $H_{0}^{1}\left(
\Omega\right)  \subset H^{1}\left(  \Omega\right)  $ consisting of function
with zero trace on $\partial\Omega$. The dual space of $H_{0}^{1}\left(
\Omega\right)  $ is denoted by $H^{-1}\left(  \Omega\right)  $. The space
$H_{0}^{1}\left(  \Omega\right)  $ is equipped with the norm $\left\Vert
v\right\Vert =\left(  \nabla v,\nabla v\right)  _{\Omega}^{1/2}$ and for
$H^{-1}\left(  \Omega\right)  $ the strong dual norm is denoted by $\left\Vert
F\right\Vert _{\ast}$.

Let $\mathbb{A}:\Omega\times\mathbb{R}\rightarrow\mathbb{R}^{n\times n}$ be
the matrix-valued diffusion coefficient which satisfies for some $\alpha
,\beta>0$ and every $%
\mbox{\boldmath$ \xi$}%
\in\mathbb{R}^{n}$%
\begin{equation}
\alpha\left\vert
\mbox{\boldmath$ \xi$}%
\right\vert ^{2}\leq\left\langle \mathbb{A}\left(  \mathbf{x},r\right)
\mbox{\boldmath$ \xi$}%
,%
\mbox{\boldmath$ \xi$}%
\right\rangle \leq\beta\left\vert
\mbox{\boldmath$ \xi$}%
\right\vert ^{2}\quad\text{a.e. }\mathbf{x}\in\Omega\text{, }\forall
r\in\mathbb{R}. \label{mc1}%
\end{equation}
We suppose that the functions $\mathbb{A}\left(  x,r\right)  $ and
$\lambda\left(  x,r\right)  $ are Carath\'{e}odory functions that is
\begin{equation}
\begin{aligned} \text{ for every } r \in\mathbb{R},~~ \mathbb{A}\left( \cdot,r\right), \lambda\left( \cdot,r\right) \text{ are measurable},\cr \text{ for almost every } \mathbf{x} \in \Omega,~~ \mathbb{A}\left(x, \cdot\right), \lambda \left(\mathbf{x}, \cdot \right) \text{ are continuous.}\cr \end{aligned} \label{mc2.5}%
\end{equation}
In addition $\lambda$ satisfies%

\begin{equation}
\label{mc2.6}0\leq\lambda\left(  \mathbf{x},r\right)  \leq\beta\quad\text{a.e.
}\mathbf{x}\in\Omega\text{, }\forall r\in\mathbb{R}.
\end{equation}

Let us denote by $\ell$ a mapping from $H_{0}^{1}\left(  \Omega\right)  $ into
$\mathbb{R}$ and by $S$ a closed subspace of $H_{0}^{1}\left(  \Omega\right)
$ .

Our goal is to investigate the problem: for given $F\in H^{-1}\left(
\Omega\right)  $ find $u=u_{S}$ such that
\begin{equation}
\label{mc3}%
\begin{cases}
u\in S,\cr \int_{\Omega} \left\langle \mathbb{A}\left(  \cdot,\ell\left(
u\right)  \right)  \nabla u,\nabla v\right\rangle +\lambda\left(  \cdot
,\ell\left(  u\right)  \right)  uv =F\left(  v\right)  \quad\forall v\in S.
\cr
\end{cases}
\end{equation}


Related to this problem is the following one parameter-dependent Poisson
equation: for $\mu\in\mathbb{R}$, find $u=u_{S,\mu}$ such that
\begin{equation}
\label{mc3.6}%
\begin{cases}
u_{S,\mu}\in S,\cr \int_{\Omega} \left\langle \mathbb{A}\left(  \cdot
,\mu\right)  \nabla u_{S,\mu},\nabla v\right\rangle +\lambda\left(  \cdot
,\mu\right)  u_{S,\mu}v =F\left(  v\right)  \quad\forall v\in S. \cr
\end{cases}
\end{equation}

\begin{remark}
The assumptions on the coefficients in (\ref{mc3.6}) imply via the Lax-Milgram
lemma the well posedness of it.

\end{remark}

An equivalence relation between these two problems is stated in the following theorem.

\begin{theorem}
\label{mcThm1}There is a one-to-one mapping ($u\longmapsto\ell\left(
u\right)  $) from the set of solutions to (\ref{mc3}) onto the set of
solutions to the fixed point equation in $\mathbb{R}$:%
\begin{equation}
\mu=\ell\left(  u_{S,\mu}\right)  . \label{mc4}%
\end{equation}

\end{theorem}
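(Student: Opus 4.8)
The plan is to construct an explicit inverse for the map $u \mapsto \ell(u)$, namely the assignment $\mu \mapsto u_{S,\mu}$ coming from the well-posed linear problem \eqref{mc3.6}, and then to verify that these two maps are mutually inverse. Everything hinges on one elementary observation: if $u$ solves \eqref{mc3} and one freezes the scalar $\mu := \ell(u)$, then \eqref{mc3} is verbatim \eqref{mc3.6} for that particular $\mu$; conversely, \eqref{mc3.6} turns into \eqref{mc3} as soon as its parameter is known to coincide with $\ell$ of its own solution. Notably, no continuity or other structural hypothesis on $\ell$ will be needed: the bijection is purely set-theoretic and is driven entirely by the uniqueness guaranteed by Lax--Milgram.

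First I would show that $u \mapsto \ell(u)$ sends solutions of \eqref{mc3} to solutions of \eqref{mc4}. Let $u$ solve \eqref{mc3} and set $\mu := \ell(u)$. Substituting this constant for $\ell(u)$ in the variational identity exhibits $u$ as a solution of \eqref{mc3.6} with parameter $\mu$; by uniqueness for \eqref{mc3.6} (the Remark) we obtain $u = u_{S,\mu}$, and therefore $\mu = \ell(u) = \ell(u_{S,\mu})$, i.e.\ $\mu$ solves \eqref{mc4}. In passing this records the identity $u = u_{S,\ell(u)}$, which will supply one of the two inverse relations.

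Next I would handle the reverse assignment. Given a solution $\mu$ of \eqref{mc4}, put $u := u_{S,\mu}$; then $u$ satisfies \eqref{mc3.6}, and since $\mu = \ell(u_{S,\mu}) = \ell(u)$, replacing the frozen parameter by $\ell(u)$ converts \eqref{mc3.6} back into \eqref{mc3}. Thus $u$ solves \eqref{mc3} with $\ell(u) = \mu$, proving surjectivity and showing that $\mu \mapsto u_{S,\mu}$ is a right inverse.

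Finally I would assemble bijectivity. Injectivity is immediate from the uniqueness for \eqref{mc3.6}: if two solutions $u_1,u_2$ of \eqref{mc3} satisfy $\ell(u_1)=\ell(u_2)=\mu$, the first step forces $u_1=u_{S,\mu}=u_2$. The two compositions are then identities, $u \mapsto \ell(u) \mapsto u_{S,\ell(u)}=u$ by the recorded identity and $\mu \mapsto u_{S,\mu} \mapsto \ell(u_{S,\mu})=\mu$ by \eqref{mc4}. The only genuine ingredient is the unique solvability of the linear problem \eqref{mc3.6}; the rest is a matter of substituting $\mu=\ell(u)$ in and out of the bilinear form, so I do not anticipate any serious obstacle.
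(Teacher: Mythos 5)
Your proposal is correct and follows essentially the same route as the paper's own (much terser) proof: both rest on the observation that a solution $u$ of \eqref{mc3} satisfies $u=u_{S,\ell(u)}$ by the Lax--Milgram uniqueness for \eqref{mc3.6}, with $\mu\mapsto u_{S,\mu}$ serving as the inverse map. You merely spell out the injectivity and surjectivity checks that the paper leaves implicit.
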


%

\proof
Suppose that $u$ is a solution to (\ref{mc3}). Then clearly,%
\[
u=u_{S,\ell\left(  u\right)  }\quad\text{and\quad}\ell\left(  u\right)
=\ell\left(  u_{S,\ell\left(  u\right)  }\right)  ,
\]
i.e., $\mu=\ell\left(  u\right)  $ is the solution to (\ref{mc4}). Conversely
if $\mu$ is a solution to (\ref{mc4}) then $u_{\mu}$ is a solution to
(\ref{mc3}).%
\endproof
\vskip .3 cm It is remarkable that \eqref{mc4} allows to solve \eqref{mc3}
with almost no functional analysis. To see this let us first establish the
following lemma.

\begin{lemma}
Under the above assumptions the mapping
\begin{equation}
\label{mc4.1}\mu\longmapsto u_{S,\mu}%
\end{equation}
is continuous from $\mathbb{R}$ into $H^{1}_{0}(\Omega)$.
\end{lemma}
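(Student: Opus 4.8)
The plan is to fix $\mu_0 \in \mathbb{R}$, take an arbitrary sequence $\mu_k \to \mu_0$, and show that the corresponding solutions $u_{S,\mu_k}$ of the parameter-dependent problem \eqref{mc3.6} converge to $u_{S,\mu_0}$ in $H_0^1(\Omega)$. Since $\mu_0$ and the sequence are arbitrary, sequential continuity gives continuity. Writing $u_k := u_{S,\mu_k}$ and $u_0 := u_{S,\mu_0}$, the first step is to establish a uniform bound: testing \eqref{mc3.6} with $v = u_k$ and using coercivity \eqref{mc1} together with $\lambda \geq 0$ from \eqref{mc2.6} yields $\alpha \|u_k\|^2 \leq F(u_k) \leq \|F\|_\ast \|u_k\|$, hence $\|u_k\| \leq \|F\|_\ast / \alpha$ uniformly in $k$.

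Next I would derive the equation satisfied by the difference $w_k := u_k - u_0$. Subtracting the two weak formulations (both posed on the same space $S$) and regrouping, for every $v \in S$ one gets
\begin{equation}
\int_\Omega \left\langle \mathbb{A}(\cdot,\mu_k)\nabla w_k,\nabla v\right\rangle + \lambda(\cdot,\mu_k) w_k v = -\int_\Omega \left\langle \big(\mathbb{A}(\cdot,\mu_k)-\mathbb{A}(\cdot,\mu_0)\big)\nabla u_0,\nabla v\right\rangle + \big(\lambda(\cdot,\mu_0)-\lambda(\cdot,\mu_k)\big) u_0 v.
\label{eq:diff}
\end{equation}
Testing \eqref{eq:diff} with $v = w_k$ and applying coercivity to the left-hand side bounds $\alpha\|w_k\|^2$ by the right-hand side, which by Cauchy--Schwarz is controlled by $\|w_k\|$ times a quantity of the form
\[
\Big(\int_\Omega |\mathbb{A}(\cdot,\mu_k)-\mathbb{A}(\cdot,\mu_0)|^2\,|\nabla u_0|^2\Big)^{1/2} + \Big(\int_\Omega |\lambda(\cdot,\mu_0)-\lambda(\cdot,\mu_k)|^2\,u_0^2\Big)^{1/2}.
\]
Dividing by $\|w_k\|$, it therefore suffices to show that these two integrals tend to $0$ as $k \to \infty$.

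The main obstacle, and the heart of the argument, is precisely this convergence of the perturbation integrals. Here is where the Carath\'eodory hypothesis \eqref{mc2.5} enters: for almost every $\mathbf{x}$, continuity in the second argument gives $\mathbb{A}(\mathbf{x},\mu_k) \to \mathbb{A}(\mathbf{x},\mu_0)$ and $\lambda(\mathbf{x},\mu_k) \to \lambda(\mathbf{x},\mu_0)$ pointwise, so the integrands converge to $0$ pointwise a.e. To upgrade this to convergence of the integrals I would invoke the Lebesgue dominated convergence theorem: the integrand $|\mathbb{A}(\cdot,\mu_k)-\mathbb{A}(\cdot,\mu_0)|^2\,|\nabla u_0|^2$ is dominated by the fixed integrable function $4\beta^2|\nabla u_0|^2$ using the uniform bound \eqref{mc1} (which forces each entry of $\mathbb{A}$ to be bounded by $\beta$), and similarly the $\lambda$-integrand is dominated by $4\beta^2 u_0^2 \in L^1(\Omega)$ by \eqref{mc2.6} and $u_0 \in H_0^1(\Omega) \subset L^2(\Omega)$. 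Hence both integrals vanish in the limit, giving $\|w_k\| \to 0$ and completing the proof.
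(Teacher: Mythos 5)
Your proof is correct, but it follows a genuinely different route from the paper's. The paper argues by weak compactness: from the uniform bound $\left\Vert u_{S,\mu_{n}}\right\Vert \leq\left\Vert F\right\Vert _{\ast}/\alpha$ it extracts a subsequence converging weakly in $H_{0}^{1}(\Omega)$ and strongly in $L^{2}(\Omega)$, passes to the limit in the weak formulation to identify the limit as $u_{S,\mu}$, invokes uniqueness of the limit to get convergence of the whole sequence, and only then upgrades weak to strong convergence through a separate coercivity identity. You bypass all of this: subtracting the two weak formulations, testing with the difference $w_{k}=u_{k}-u_{0}$, and using coercivity reduces everything to showing that the two perturbation integrals $\int_{\Omega}\left\Vert \mathbb{A}(\cdot,\mu_{k})-\mathbb{A}(\cdot,\mu_{0})\right\Vert _{2}^{2}\left\vert \nabla u_{0}\right\vert ^{2}$ and $\int_{\Omega}\left\vert \lambda(\cdot,\mu_{k})-\lambda(\cdot,\mu_{0})\right\vert ^{2}u_{0}^{2}$ vanish, which the Carath\'{e}odory hypothesis (\ref{mc2.5}) plus dominated convergence delivers. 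This is more direct and quantitative (it produces an explicit modulus of continuity and avoids both the subsequence extraction and the Rellich embedding), and it is in essence the same perturbation technique the paper deploys later for the term $d_{S}=u_{S,\mu}-u_{S,\mu_{S}}$ in the proof of Theorem \ref{ThmGalConv}, there under a Lipschitz hypothesis rather than mere continuity. Two minor points: the second perturbation term is naturally paired with $\left\Vert w_{k}\right\Vert _{L^{2}(\Omega)}\leq C_{\operatorname*{P}}\left\Vert w_{k}\right\Vert$, so a Poincar\'{e} constant should appear; and the bound $\left\Vert \mathbb{A}(\mathbf{x},r)\right\Vert _{2}\leq\beta$ follows from (\ref{mc1}) only for symmetric $\mathbb{A}$ --- but the paper's own step $\mathbb{A}(\cdot,\mu_{n})\nabla v\rightarrow\mathbb{A}(\cdot,\mu)\nabla v$ in $(L^{2}(\Omega))^{n}$ needs the same domination, so your argument is no less rigorous than the original.
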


\begin{proof}
Consider a sequence $\mu_{n}$ such that $\mu_{n} \to\mu$ when $n\to+\infty.$
One has by \eqref{mc3.6}%

\begin{equation}
\label{mc4.2}\int_{\Omega} \left\langle \mathbb{A}\left(  \cdot,\mu_{n}
\right)  \nabla u_{S,\mu_{n}},\nabla v\right\rangle +\lambda\left(  \cdot
,\mu_{n}\right)  u_{S,\mu_{n}}v =F\left(  v\right)  \quad\forall v\in S.
\end{equation}
Taking $v = u_{S,\mu_{n}}$ we deduce by \eqref{mc1}, \eqref{mc2.6}
\begin{equation}
\label{mc4.3}\alpha\int_{\Omega} \left\vert \nabla u_{S,\mu_{n}} \right\vert
^{2} \leq F\left(  u_{S,\mu_{n}}\right)  \leq\left\Vert F\right\Vert _{\ast}
\Big( \int_{\Omega} \left\vert \nabla u_{S,\mu_{n}} \right\vert ^{2}
\Big)^{\frac{1}{2}}%
\end{equation}
and thus
\begin{equation}
\label{mc4.4}\int_{\Omega} \left\vert \nabla u_{S,\mu_{n}} \right\vert ^{2}
\leq\frac{ \left\Vert F\right\Vert _{\ast}^{2} }{\alpha^{2}}.
\end{equation}
Since $u_{S,\mu_{n}}$ is bounded in $H^{1}_{0}(\Omega)$ - up to a subsequence
- one has for some $u \in H^{1}_{0}(\Omega)$
\[
\nabla u_{S,\mu_{n}} \rightharpoonup\nabla u \text{ in } ({L}^{2}(\Omega
))^{n}~~,~~ u_{S,\mu_{n}} \to u \text{ in } ({L}^{2}(\Omega) .
\]
For any $v\in S$ it holds
\[
\mathbb{A}\left(  \cdot,\mu_{n} \right)  \nabla v \to\mathbb{A}\left(
\cdot,\mu\right)  \nabla v \text{ in } ({L}^{2}(\Omega))^{n}~~,~~
\lambda\left(  \cdot,\mu_{n}\right)  v \to\lambda\left(  \cdot,\mu\right)  v
\text{ in } {L}^{2}(\Omega) .
\]
Passing to the limit in \eqref{mc4.2} one sees that $u$ satisfies
\begin{equation}
\label{mc4.5}\int_{\Omega} \left\langle \mathbb{A}\left(  \cdot,\mu\right)
\nabla u,\nabla v\right\rangle +\lambda\left(  \cdot,\mu\right)  uv =F\left(
v\right)  \quad\forall v\in S,
\end{equation}
i.e., $u=u_{S,\mu}$. Since the limit is uniquely determined one has
\begin{equation}
\label{mc}u_{S,\mu_{n}} \rightharpoonup u_{S,\mu} \text{ in } H^{1}_{0}%
(\Omega)~~,~~ u_{S,\mu_{n}} \to u_{S,\mu} \text{ in } {L}^{2}(\Omega) .
\end{equation}
To obtain the strong convergence in $H^{1}_{0}(\Omega)$ one notices that
\begin{equation}
\label{mcc}\begin{aligned} \alpha& \int_{\Omega} \left\vert \nabla ( u_{S,\mu_n} - u_{S,\mu}) \right\vert^2 \cr & \leq \int_{\Omega} \left\langle \mathbb{A}\left( \cdot,\mu_n \right) \nabla ( u_{S,\mu_n} - u_{S,\mu}),\nabla ( u_{S,\mu_n} - u_{S,\mu})\right\rangle \cr &~~~~~~~~~~~~~~~~~~~~~~~~~~~~~~~+\lambda\left( \cdot ,\mu_n\right) ( u_{S,\mu_n} - u_{S,\mu})( u_{S,\mu_n} - u_{S,\mu})\cr &= F( u_{S,\mu_n} - u_{S,\mu}) \cr &~~~~~~~~~~~~ - \int_{\Omega} \left\langle \mathbb{A}\left( \cdot,\mu_n \right) \nabla u_{S,\mu},\nabla ( u_{S,\mu_n} - u_{S,\mu})\right\rangle + \lambda\left( \cdot ,\mu_n\right) u_{S,\mu}( u_{S,\mu_n} - u_{S,\mu})\cr \end{aligned}
\end{equation}
and this last quantity goes to 0 when $n\to+\infty$.
\end{proof}

\vskip .5 cm Then one has the following very simple existence result.

\begin{theorem}
Suppose that $\ell: S \to\mathbb{R}$ is continuous and bounded on bounded sets
of $S$, then \eqref{mc3} admits at least one solution.
\end{theorem}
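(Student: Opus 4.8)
The plan is to cash in the equivalence established in Theorem \ref{mcThm1}: by that theorem, $u$ solves \eqref{mc3} if and only if $\mu=\ell(u)$ solves the scalar fixed point equation $\mu=\ell(u_{S,\mu})$, and conversely every solution $\mu^{\ast}$ of the scalar equation produces the solution $u_{S,\mu^{\ast}}$ of \eqref{mc3}. So the whole problem reduces to showing that the map $g:\mathbb{R}\to\mathbb{R}$ defined by $g(\mu):=\ell(u_{S,\mu})$ has a fixed point, and this is where the assumptions on $\ell$ come in.

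First I would verify that $g$ is continuous. The preceding lemma asserts that $\mu\longmapsto u_{S,\mu}$ is continuous from $\mathbb{R}$ into $H_{0}^{1}(\Omega)$, and each $u_{S,\mu}$ lies in $S$ by \eqref{mc3.6}; since $\ell:S\to\mathbb{R}$ is assumed continuous, the composition $g=\ell\circ(\mu\longmapsto u_{S,\mu})$ is continuous on $\mathbb{R}$. Second, I would show that the range of $g$ is bounded. The a priori estimate \eqref{mc4.4} was derived for an arbitrary parameter value and therefore holds uniformly, giving $\|u_{S,\mu}\|\le\|F\|_{\ast}/\alpha$ for every $\mu\in\mathbb{R}$. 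Hence the set $\{u_{S,\mu}:\mu\in\mathbb{R}\}$ is bounded in $S$, and because $\ell$ is bounded on bounded subsets of $S$, there exists $M>0$ such that $|g(\mu)|\le M$ for all $\mu\in\mathbb{R}$.

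I would then finish with an elementary one-dimensional argument. Since $g(\mathbb{R})\subset[-M,M]$, the continuous function $g$ in particular maps the interval $[-M,M]$ into itself. Applying the intermediate value theorem to $h(\mu):=g(\mu)-\mu$, which satisfies $h(-M)=g(-M)+M\ge 0$ and $h(M)=g(M)-M\le 0$, produces a point $\mu^{\ast}\in[-M,M]$ with $h(\mu^{\ast})=0$, i.e.\ $\mu^{\ast}=\ell(u_{S,\mu^{\ast}})$. By Theorem \ref{mcThm1}, $u_{S,\mu^{\ast}}$ is then a solution of \eqref{mc3}, which proves existence.

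The point worth stressing is that there is essentially no serious obstacle here: all the analytic content sits in the continuity lemma and in the uniform bound \eqref{mc4.4}, and the existence of the fixed point itself needs only the intermediate value theorem rather than any infinite-dimensional fixed point theory. The two things to watch are that the energy bound is genuinely uniform in $\mu$ (so that the range of $g$ is bounded rather than merely locally bounded) and that $u_{S,\mu}\in S$ so that $\ell$ may be legitimately applied; both are immediate from the definitions. This is exactly the simplification announced in the remark following Theorem \ref{mcThm1}.
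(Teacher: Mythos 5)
Your proposal is correct and follows essentially the same route as the paper: uniform boundedness of $u_{S,\mu}$ from the energy estimate, continuity of $\mu\mapsto\ell(u_{S,\mu})$ from the preceding lemma and the hypotheses on $\ell$, and then the observation that the line $y=\mu$ must cross the bounded continuous graph $y=\ell(u_{S,\mu})$. Your intermediate-value-theorem argument on $[-M,M]$ merely makes explicit the crossing argument the paper states in one sentence.
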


\begin{proof}
From \eqref{mc3.6} one deduces as in \eqref{mc4.3}, \eqref{mc4.4} that
$u_{S,\mu}$ is bounded independently of $\mu$ in $H^{1}_{0}(\Omega)$. Then due
to the preceding lemma and our assumption on $\ell$ the function
$\mu\longmapsto\ell(u_{S,\mu})$ is continuous and bounded uniformly in $\mu$.
Thus the straight line $y = \mu$ is cutting at at least one point the graph of
the curve $y = \ell(u_{S,\mu})$.
\end{proof}

\vskip .5 cm The following situation is especially interesting.

\begin{corollary}
\label{CorSimplCoeff}Suppose that%
\begin{equation}
\lambda=0\quad\text{and\quad}\mathbb{A}\left(  \mathbf{x},r\right)  =a\left(
r\right)  \operatorname*{Id}\quad\text{a.e. }\mathbf{x}\in\Omega,\ \forall
r\in\mathbb{R}, \label{mc5}%
\end{equation}
where $\operatorname*{Id}$ denotes the $n\times n$ identity matrix and that
$\ell\left(  \cdot\right)  $ is homogeneous of degree $p>0$, i.e.,%
\begin{equation}
\ell\left(  \alpha u\right)  =\alpha^{p}\ell\left(  u\right)  \qquad
\forall\alpha\geq0,\ \ \forall u\in S. \label{mc6}%
\end{equation}
Then, there is a one-to-one mapping ($u\longmapsto\ell\left(  u\right)  $)
between the solutions to (\ref{mc3}) and the solution of the equation%
\begin{equation}
\mu=\frac{\ell\left(  \psi_{S}\right)  }{a\left(  \mu\right)  ^{p}},
\label{FPsimplified}%
\end{equation}
where $\psi_{S}\in S\ $ is the weak solution of the Poisson problem:%
\begin{equation}%
\begin{cases}
\psi_{S}\in S,\cr\int_{\Omega}\left\langle \nabla\psi_{S},\nabla
v\right\rangle =F\left(  v\right)  \quad\forall v\in S.\label{linPoisson}%
\end{cases}
\end{equation}

\end{corollary}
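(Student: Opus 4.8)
The plan is to reduce the specialized problem to the general fixed-point equivalence already established in Theorem~\ref{mcThm1}, by computing the parameter-dependent solution $u_{S,\mu}$ explicitly in terms of the single Poisson solution $\psi_{S}$.

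First I would specialize the parameter problem \eqref{mc3.6} to the hypotheses \eqref{mc5}. With $\lambda=0$ and $\mathbb{A}(\cdot,\mu)=a(\mu)\operatorname{Id}$, the coercivity bound \eqref{mc1} forces $\alpha\leq a(\mu)\leq\beta$, so in particular $a(\mu)>0$ for every $\mu\in\mathbb{R}$. The variational identity then reduces to
\[
a(\mu)\int_{\Omega}\left\langle \nabla u_{S,\mu},\nabla v\right\rangle =F(v)\qquad\forall v\in S,
\]
and dividing by $a(\mu)$ shows that $a(\mu)\,u_{S,\mu}$ satisfies exactly the defining identity \eqref{linPoisson} of $\psi_{S}$.

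The key step is then to invoke the uniqueness of the Poisson solution (guaranteed by Lax--Milgram on the closed subspace $S$) to conclude
\[
u_{S,\mu}=\frac{1}{a(\mu)}\,\psi_{S}.
\]
Since $1/a(\mu)\geq 0$, the homogeneity assumption \eqref{mc6} applies with the scalar $\alpha=1/a(\mu)$ and yields
\[
\ell(u_{S,\mu})=\ell\!\left(\frac{1}{a(\mu)}\,\psi_{S}\right)=\frac{\ell(\psi_{S})}{a(\mu)^{p}}.
\]
Thus the general fixed-point equation \eqref{mc4}, namely $\mu=\ell(u_{S,\mu})$, becomes precisely \eqref{FPsimplified}.

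Finally I would appeal to Theorem~\ref{mcThm1}: the map $u\longmapsto\ell(u)$ is a bijection from the solution set of \eqref{mc3} onto the solution set of \eqref{mc4}, and since under \eqref{mc5}--\eqref{mc6} the equation \eqref{mc4} coincides with \eqref{FPsimplified}, the stated one-to-one correspondence follows. I do not expect any genuine obstacle here; the only point requiring care is the verification that $a(\mu)>0$, so that both the division by $a(\mu)$ and the nonnegative-scalar homogeneity \eqref{mc6} are legitimate, and this is immediate from the ellipticity bound \eqref{mc1}.
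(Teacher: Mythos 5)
Your proposal is correct and follows essentially the same route as the paper: reduce the parameter problem to $a(\mu)\,u_{S,\mu}=\psi_{S}$, apply the homogeneity \eqref{mc6} to obtain $a(\mu)^{p}\ell(u_{S,\mu})=\ell(\psi_{S})$, and conclude via Theorem~\ref{mcThm1}. Your explicit verification that $a(\mu)\geq\alpha>0$ from \eqref{mc1} is a welcome detail that the paper leaves implicit.
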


%

\proof
For $\mu\in\mathbb{R}$, $u_{S,\mu}$ is the solution to
\[%
\begin{cases}
u_{S,\mu} \in S, \cr \int_{\Omega}a(\mu)\left\langle \nabla u_{S,\mu},\nabla
v\right\rangle =F\left(  v\right)  \quad\forall v\in S,
\end{cases}
\]

i.e. one has $a\left(  \mu\right)  u_{S,\mu}=\psi_{S}$. We apply $\ell\left(
\cdot\right)  $ to both sides and employ (\ref{mc6}) to obtain%
\[
a\left(  \mu\right)  ^{p}\ell\left(  u_{S\mu}\right)  =\ell\left(  \psi
_{S}\right)  .
\]
The result then follows from (\ref{mc4}).%
\endproof

\begin{example}
Some suitable functionals $\ell\left(  \cdot\right)  $ are given by%
\begin{align*}
\ell\left(  u\right)  =  &  \int_{\Omega}u,\quad\int_{\Omega}\left\vert
u\right\vert ^{p}\quad\text{for }1\leq p<p^{\ast}\text{ and }p^{\ast}\text{
such that }H_{0}^{1}\left(  \Omega\right)  \hookrightarrow L^{p^{\ast}}\left(
\omega\right)  ,\\
&  \int_{\Omega}\left\vert \nabla u\right\vert ^{2},\quad\int_{\Omega^{\prime
}}u,\quad\int_{\Omega^{\prime}}\left\vert \nabla u\right\vert ^{2},\quad
\ldots\quad\text{for }\Omega^{\prime}\subset\Omega.
\end{align*}

\end{example}

\section{A more general class of problems}

Of course, existence and uniqueness results can be achieved just by our
derivation of (\ref{mc4}). However, we will embed the above problems in a more
general class of problems which includes in particular the Galerkin
approximation of such problems.

Let $\mathbb{L}^{\infty}\left(  \Omega\right)  $ be the set of functions from
$\Omega$ to $\mathbb{R}^{n\times n}$ whose coefficients belong to $L^{\infty
}\left(  \Omega\right)  $. We assume that $\mathbb{A}:H_{0}^{1}\left(
\Omega\right)  \rightarrow\mathbb{L}^{\infty}\left(  \Omega\right)  $ and
$\lambda:H_{0}^{1}\left(  \Omega\right)  \rightarrow L^{\infty}\left(
\Omega\right)  $ are continuous on finite dimensional subspaces of $H_{0}%
^{1}\left(  \Omega\right)  $ and that there exist constants $\alpha,\beta>0$
such that\footnote{Note that the diffusion and the reaction coefficients in
(\ref{mc3}) induce a diffusion and reaction coefficient in the new notation
(\ref{mc1prime}) via $\mathbb{A}\left(  u\right)  \left(  \mathbf{x}\right)
\leftarrow\mathbb{A}\left(  \mathbf{x},\ell\left(  u\right)  \right)  $ and
$\lambda\left(  u\right)  \left(  \mathbf{x}\right)  \leftarrow\lambda\left(
\mathbf{x},\ell\left(  u\right)  \right)  $.}%
\begin{equation}
\left.
\begin{array}
[c]{c}%
\alpha\left\vert
\mbox{\boldmath$ \xi$}%
\right\vert ^{2}\leq\left\langle \mathbb{A}\left(  u\right)  \left(
\mathbf{x}\right)
\mbox{\boldmath$ \xi$}%
,%
\mbox{\boldmath$ \xi$}%
\right\rangle \leq\beta\left\vert
\mbox{\boldmath$ \xi$}%
\right\vert ^{2}\\
\beta\geq\lambda\left(  u\right)  \left(  \mathbf{x}\right)  \geq0
\end{array}
\right\}  \quad\left\{
\begin{array}
[c]{l}%
\text{a.e. }\mathbf{x}\in\Omega,\\
\forall%
\mbox{\boldmath$ \xi$}%
\in\mathbb{R}^{n},\\
\forall u\in H_{0}^{1}\left(  \Omega\right)  .
\end{array}
\right.  \label{mc1prime}%
\end{equation}
Denote by $S$ a finite dimensional subspace of $H_{0}^{1}\left(
\Omega\right)  $. We consider the Galerkin discretization:%
\begin{equation}
\text{find }u_{S}\in S\text{ such that}\quad\int_{\Omega} \left\langle
\mathbb{A}\left(  u_{S}\right)  \nabla u_{S},\nabla v\right\rangle
+\lambda\left(  u_{S}\right)  u_{S}v =F\left(  v\right)  \quad\forall v\in S.
\label{mc7}%
\end{equation}
The existence of a solution to (\ref{mc7}) is stated in the following theorem.

\begin{theorem}
\label{mcThm2}Let (\ref{mc1prime}) be satisfied. For any $F\in H^{-1}\left(
\Omega\right)  $, problem (\ref{mc7}) has a solution.
\end{theorem}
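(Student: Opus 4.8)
The plan is to recast (\ref{mc7}) as the search for a zero of a continuous vector field on the finite-dimensional space $S$, and then to invoke the standard consequence of Brouwer's fixed point theorem. Equip $S$ with the inner product $(\nabla\cdot,\nabla\cdot)_{\Omega}$ inducing the norm $\left\Vert\cdot\right\Vert$, and define a map $P:S\to S$ by requiring
\[
(\nabla P(w),\nabla v)_{\Omega}=\int_{\Omega}\left\langle\mathbb{A}(w)\nabla w,\nabla v\right\rangle+\lambda(w)wv-F(v)\qquad\forall v\in S.
\]
This is legitimate because, for fixed $w$, the right-hand side is a bounded linear functional of $v\in S$ and $S$ is a finite-dimensional, hence closed, Hilbert space, so the Riesz representation theorem produces a unique $P(w)\in S$. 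By construction $u_{S}\in S$ solves (\ref{mc7}) if and only if $P(u_{S})=0$.

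Next I would verify that $P$ is continuous. If $w_{n}\to w$ in $S$, then by the assumed continuity of $\mathbb{A}$ and $\lambda$ on the finite-dimensional subspace $S$ one has $\mathbb{A}(w_{n})\to\mathbb{A}(w)$ in $\mathbb{L}^{\infty}(\Omega)$ and $\lambda(w_{n})\to\lambda(w)$ in $L^{\infty}(\Omega)$, while $\nabla w_{n}\to\nabla w$ and $w_{n}\to w$ in $L^{2}$. Splitting $\mathbb{A}(w_{n})\nabla w_{n}-\mathbb{A}(w)\nabla w=\mathbb{A}(w_{n})(\nabla w_{n}-\nabla w)+(\mathbb{A}(w_{n})-\mathbb{A}(w))\nabla w$ and using the uniform bound $\beta$ from (\ref{mc1prime}), one sees that the defining functional converges uniformly over $v$ in the unit ball of $S$, and similarly for the $\lambda$ term; hence $P(w_{n})\to P(w)$ in $S$. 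This is elementary precisely because $S$ is finite-dimensional, so all norms are equivalent and no compactness is needed.

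Third comes a coercivity estimate. Testing the definition of $P$ with $v=w$ and using the lower bound in (\ref{mc1prime}), the nonnegativity $\lambda(w)\ge0$, and $|F(w)|\le\left\Vert F\right\Vert_{\ast}\left\Vert w\right\Vert$ yields
\[
(\nabla P(w),\nabla w)_{\Omega}\ge\alpha\left\Vert w\right\Vert^{2}-\left\Vert F\right\Vert_{\ast}\left\Vert w\right\Vert=\left\Vert w\right\Vert\bigl(\alpha\left\Vert w\right\Vert-\left\Vert F\right\Vert_{\ast}\bigr).
\]
Fixing any $R>\left\Vert F\right\Vert_{\ast}/\alpha$ therefore gives $(\nabla P(w),\nabla w)_{\Omega}>0$ for every $w\in S$ with $\left\Vert w\right\Vert=R$.

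Finally I would apply the Brouwer corollary: a continuous self-map $P$ of a finite-dimensional inner product space satisfying $(\nabla P(w),\nabla w)_{\Omega}>0$ on the sphere $\left\Vert w\right\Vert=R$ must vanish at some point of the ball $\left\Vert w\right\Vert\le R$. Indeed, were $P$ nonvanishing there, the continuous map $w\mapsto-R\,P(w)/\left\Vert P(w)\right\Vert$ would send the closed ball into itself, and its Brouwer fixed point $w_{0}$ would lie on the sphere and satisfy $(\nabla P(w_{0}),\nabla w_{0})_{\Omega}=-R\left\Vert P(w_{0})\right\Vert<0$, a contradiction. Any such zero of $P$ is a solution of (\ref{mc7}). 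I expect the only genuinely delicate step to be the continuity verification, since that is where the hypothesis that $\mathbb{A}$ and $\lambda$ are continuous on finite-dimensional subspaces is used essentially; the coercivity bound and the Brouwer argument are routine.
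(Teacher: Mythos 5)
Your proof is correct, but it takes a genuinely different route from the paper's. The paper introduces the frozen-coefficient solution operator $T:S\rightarrow S$, $T(w)=u_{S}$ with $u_{S}$ solving (\ref{mc8}), shows that $T$ maps the ball $B=\{v\in S:\Vert v\Vert\leq\Vert F\Vert_{\ast}/\alpha\}$ into itself and is continuous (the latter via a boundedness/weak-convergence/subsequence argument), and applies Brouwer's fixed point theorem directly to $T$. You instead encode the residual of (\ref{mc7}) as a vector field $P$ on $S$ via the Riesz representation, prove the sign condition $(\nabla P(w),\nabla w)_{\Omega}>0$ on a sphere of radius $R>\Vert F\Vert_{\ast}/\alpha$, and invoke the standard ``acute angle'' corollary of Brouwer, whose proof by contradiction you reproduce correctly. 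Your continuity verification is also more direct: you exploit the assumed $\mathbb{L}^{\infty}(\Omega)$- and $L^{\infty}(\Omega)$-continuity of $\mathbb{A}$ and $\lambda$ on the finite-dimensional space $S$ to get a norm estimate on the defining functional, with no extraction of subsequences. What the paper's formulation buys is that $T$ is precisely the frozen-coefficient iteration reappearing in Section \ref{SecNumSol}, and the same weak-convergence machinery is reused in the proof of Theorem \ref{mcThm4}; what yours buys is that you never need to solve or discuss the well-posedness of the linearized problem (\ref{mc8}), and the entire analysis is concentrated in one coercivity estimate. A cosmetic remark: the weak inequality $(\nabla P(w),\nabla w)_{\Omega}\geq0$ on the sphere already suffices, since the contradiction you derive, $-R\Vert P(w_{0})\Vert<0$, is strict whenever $P$ is assumed nonvanishing.
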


%

\proof
Let $w\in S$. By our assumptions there exists a unique $u_{S}=T\left(
w\right)  $ which is the solution to%
\begin{equation}
\text{find }u_{S}\in S\text{ such that}\quad\int_{\Omega} \left\langle
\mathbb{A}\left(  w\right)  \nabla u_{S},\nabla v\right\rangle +\lambda\left(
w\right)  u_{S}v =F\left(  v\right)  \quad\forall v\in S. \label{mc8}%
\end{equation}
We claim that $T$ maps%
\[
B:=\left\{  v\in S\mid\left\Vert v\right\Vert \leq\left\Vert F\right\Vert
_{\ast}/\alpha\right\}
\]
into itself. Indeed, if $w\in B$ one has by taking $v=u_{S}$ in (\ref{mc8})
the estimate%
\begin{equation}
\alpha\left\Vert u_{S}\right\Vert ^{2}\leq F\left(  u_{S}\right)
\leq\left\Vert F\right\Vert _{\ast}\left\Vert u_{S}\right\Vert , \label{mc9}%
\end{equation}
i.e., $u_{S}\in B$.

Next, we prove that $T$ is continuous. Consider a sequence $\left(
w_{n}\right)  _{n}$ in $S$ which converges to $w\in S$ with respect to the
norm $\left\Vert \cdot\right\Vert $. Let $u_{S,n}:=T\left(  w_{n}\right)  $.
Since $\left(  u_{S,n}\right)  _{n}$ is uniformly bounded, one has for a
subsequence:%
\[
u_{S,n}\rightharpoonup u_{S}\quad\text{in }H_{0}^{1}\left(  \Omega\right)
,\quad u_{S,n}\rightarrow u_{S}\quad\text{in }L^{2}\left(  \Omega\right)
\]
for some $u_{S}\in S$, i.e., $\nabla u_{S,n}\rightharpoonup\nabla u_{S}$ in
$\left(  L^{2}\left(  \Omega\right)  \right)  ^{n}$. The function $u_{S,n}$
satisfies%
\begin{equation}
\int_{\Omega} \left\langle \mathbb{A}\left(  w_{n}\right)  \nabla
u_{S,n},\nabla v\right\rangle +\lambda\left(  w_{n}\right)  u_{S,n}v =F\left(
v\right)  \quad\forall v\in S. \label{mc10}%
\end{equation}
Due to our continuity assumptions on $\mathbb{A}$ and $\lambda$ one has%
\[
\mathbb{A}\left(  w_{n}\right)  \nabla v\rightarrow\mathbb{A}\left(  w\right)
\nabla v\quad\text{in }\left(  L^{2}\left(  \Omega\right)  \right)  ^{n}%
,\quad\lambda\left(  w_{n}\right)  v\rightarrow\lambda\left(  w\right)
v\quad\text{in }L^{2}\left(  \Omega\right)  .
\]
By passing to the limit in (\ref{mc10}) we obtain%
\[
\int_{\Omega} \left\langle \mathbb{A}\left(  w\right)  \nabla u_{S},\nabla
v\right\rangle +\lambda\left(  w\right)  u_{S}v =F\left(  v\right)
\quad\forall v\in S
\]
and, in turn, $u_{S}=T\left(  w\right)  $. Since the limit is uniquely
determined the whole sequence $u_{S,n}$ converges toward $u_{S}=T\left(
w\right)  $. In this way it is proved that $T$ is a continuous mapping from
$B$ into itself and it has a fixed point as a consequence of the Brouwer fixed
point theorem.%
\endproof

\vskip .3 cm For Lipschitz continuous diffusion coefficients $\mathbb{A}$ and
$\lambda=0$, the following uniqueness result holds.

\begin{theorem}
\label{mcThm3}Suppose that $\lambda=0$ and $\mathbb{A}$ is such
that\footnote{For matrix $\mathbb{A}\in\mathbb{R}^{n\times n}$, the norm
$\left\Vert \cdot\right\Vert _{2}$ is given by $\left\Vert \mathbb{A}%
\right\Vert _{2}:=\sup\left\{  \left\vert \mathbb{A}\mathbf{v}\right\vert
/\left\vert \mathbf{v}\right\vert :\mathbf{v}\in\mathbb{R}^{n}\backslash
\left\{  0\right\}  \right\}  $.}%
\begin{equation}
\left\Vert \mathbb{A}\left(  u\right)  \left(  \mathbf{x}\right)
-\mathbb{A}\left(  \tilde{u}\right)  \left(  \mathbf{x}\right)  \right\Vert
_{2}\leq\gamma\left\Vert u-\tilde{u}\right\Vert \quad\text{a.e. }\mathbf{x}%
\in\Omega\quad\forall u,\tilde{u}\in H_{0}^{1}\left(  \Omega\right)  .
\label{mc11}%
\end{equation}
Let the following smallness assumption be satisfied%
\begin{equation}
\gamma\left\Vert F\right\Vert _{\ast}<\alpha^{2}. \label{mc12}%
\end{equation}
Then, the solution to (\ref{mc7}) is unique.
\end{theorem}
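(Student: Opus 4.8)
The plan is to derive a single energy inequality for the difference of two solutions. Suppose $u_{S}$ and $\tilde{u}_{S}$ both solve (\ref{mc7}) with $\lambda=0$, and set $w:=u_{S}-\tilde{u}_{S}\in S$. Subtracting the two weak formulations yields
\[
\int_{\Omega}\left\langle \mathbb{A}(u_{S})\nabla u_{S}-\mathbb{A}(\tilde{u}_{S})\nabla\tilde{u}_{S},\nabla v\right\rangle =0\qquad\forall v\in S,
\]
and since $w\in S$ is an admissible test function, I would test with $v=w$.

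The key algebraic step is to add and subtract $\mathbb{A}(u_{S})\nabla\tilde{u}_{S}$, so that
\[
\mathbb{A}(u_{S})\nabla u_{S}-\mathbb{A}(\tilde{u}_{S})\nabla\tilde{u}_{S}=\mathbb{A}(u_{S})\nabla w+\bigl(\mathbb{A}(u_{S})-\mathbb{A}(\tilde{u}_{S})\bigr)\nabla\tilde{u}_{S}.
\]
Inserting this and moving the second term to the right-hand side, the coercivity in (\ref{mc1prime}) bounds the first contribution from below by $\alpha\left\Vert w\right\Vert ^{2}$, giving
\[
\alpha\left\Vert w\right\Vert ^{2}\leq-\int_{\Omega}\left\langle \bigl(\mathbb{A}(u_{S})-\mathbb{A}(\tilde{u}_{S})\bigr)\nabla\tilde{u}_{S},\nabla w\right\rangle .
\]

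For the right-hand side I would estimate pointwise using the matrix operator norm and the Lipschitz hypothesis (\ref{mc11}), namely $\left\Vert \mathbb{A}(u_{S})(\mathbf{x})-\mathbb{A}(\tilde{u}_{S})(\mathbf{x})\right\Vert _{2}\leq\gamma\left\Vert w\right\Vert $ a.e., and then the Cauchy--Schwarz inequality in $L^{2}(\Omega)$, which produces the bound $\gamma\left\Vert w\right\Vert \left\Vert \tilde{u}_{S}\right\Vert \left\Vert w\right\Vert $. The a priori estimate (\ref{mc9}), applied to the solution $\tilde{u}_{S}$, gives $\left\Vert \tilde{u}_{S}\right\Vert \leq\left\Vert F\right\Vert _{\ast}/\alpha$, so altogether
\[
\alpha\left\Vert w\right\Vert ^{2}\leq\frac{\gamma\left\Vert F\right\Vert _{\ast}}{\alpha}\left\Vert w\right\Vert ^{2}.
\]
Since the smallness condition (\ref{mc12}) reads $\gamma\left\Vert F\right\Vert _{\ast}<\alpha^{2}$, the factor $\alpha-\gamma\left\Vert F\right\Vert _{\ast}/\alpha$ is strictly positive, forcing $\left\Vert w\right\Vert =0$ and hence $u_{S}=\tilde{u}_{S}$.

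The argument is essentially one energy estimate, so I do not expect a serious obstacle. The only points requiring care are the add-and-subtract decomposition that isolates a genuinely coercive term $\mathbb{A}(u_{S})\nabla w$ (one must keep the \emph{same} argument $u_{S}$ in the matrix multiplying $\nabla w$, rather than mixing the two solutions), and the recognition that the uniform a priori bound (\ref{mc9}) must be invoked for one of the two solutions in order to close the inequality. The smallness assumption (\ref{mc12}) is then precisely what makes the resulting constant beat the coercivity constant $\alpha$.
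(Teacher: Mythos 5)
Your proposal is correct and follows essentially the same route as the paper: the identical add-and-subtract decomposition isolating $\mathbb{A}(u_{S})\nabla(u_{S}-\tilde{u}_{S})$, the same test function $v=u_{S}-\tilde{u}_{S}$, the pointwise operator-norm/Lipschitz bound followed by Cauchy--Schwarz, the a priori bound $\left\Vert \tilde{u}_{S}\right\Vert \leq\left\Vert F\right\Vert _{\ast}/\alpha$ from (\ref{mc9}), and the smallness condition (\ref{mc12}) to close the estimate. No gaps.
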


%

\proof
Let $u_{S}$, $\tilde{u}_{S}$ be two solutions to (\ref{mc7}). One has%
\[
\int_{\Omega}\left\langle \mathbb{A}\left(  u_{S}\right)  \nabla u_{S},\nabla
v\right\rangle =\int_{\Omega}\left\langle \mathbb{A}\left(  \tilde{u}%
_{S}\right)  \nabla\tilde{u}_{S},\nabla v\right\rangle \quad\forall v\in S
\]
which can be rewritten as%
\begin{equation}
\int_{\Omega}\left\langle \mathbb{A}\left(  u_{S}\right)  \nabla\left(
u_{S}-\tilde{u}_{S}\right)  ,\nabla v\right\rangle =\int_{\Omega}\left\langle
\left(  \mathbb{A}\left(  \tilde{u}_{S}\right)  -\mathbb{A}\left(
u_{S}\right)  \right)  \nabla\tilde{u}_{S},\nabla v\right\rangle \quad\forall
v\in S. \label{mc12.5}%
\end{equation}
For the choice $v=u_{S}-\tilde{u}_{S}$ we get%
\begin{align*}
\alpha\int_{\Omega}\left\vert \nabla\left(  u-\tilde{u}_{S}\right)
\right\vert ^{2}  &  \leq\int_{\Omega}\left\Vert \mathbb{A}\left(  \tilde
{u}_{S}\right)  -\mathbb{A}\left(  u_{S}\right)  \right\Vert _{2}\left\vert
\nabla\tilde{u}_{S}\right\vert \left\vert \nabla\left(  u_{S}-\tilde{u}%
_{S}\right)  \right\vert \\
&  \leq\gamma\left\Vert u_{S}-\tilde{u}_{S}\right\Vert \int_{\Omega}\left\vert
\nabla\tilde{u}_{S}\right\vert \left\vert \nabla\left(  u_{S}-\tilde{u}%
_{S}\right)  \right\vert .
\end{align*}
By a Cauchy-Schwarz inequality and (\ref{mc9}), i.e., $\alpha\left\Vert
u_{S}\right\Vert \leq\left\Vert F\right\Vert _{\ast}$ it follows%
\[
\alpha\left\Vert u-u_{S}\right\Vert ^{2}\leq\gamma\left\Vert u-u_{S}%
\right\Vert ^{2}\left\Vert u_{S}\right\Vert \leq\frac{\gamma}{\alpha
}\left\Vert u-u_{S}\right\Vert ^{2}\left\Vert F\right\Vert _{\ast}%
\]
and necessarily $u=u_{S}$ if $\gamma\left\Vert F\right\Vert _{\ast}/\alpha
^{2}<1$.
\endproof

\begin{remark}
Note that the theorem remains valid with a local Lipschitz continuity
assumption \eqref{mc11} since the solutions are a priori bounded. Also to pass
to the limit in (\ref{mc10}) one only needs the following continuity%
\begin{equation}
w_{S}\rightarrow w\quad\text{in }L^{2}\left(  \Omega\right)  \implies
\mathbb{A}\left(  w_{S}\right)  ,\ \lambda\left(  w_{S}\right)  \rightarrow
\mathbb{A}\left(  w\right)  ,\ \lambda\left(  w\right)  \quad\text{a.e. in
}\Omega. \label{mc13}%
\end{equation}
Note also that with an easy modification the theorem is valid for $\lambda$
such that $u\rightarrow\lambda(u)$ is monotone. Indeed in this case
\eqref{mc12.5} is replaced by
\begin{equation}
\begin{aligned} &\int_{\Omega}\left\langle \mathbb{A}\left( u_{S}\right) \nabla\left( u_{S}-\tilde{u}_{S}\right) ,\nabla v\right\rangle \cr &~~~~~~~~~~~~~~~~~~~~~=\int_{\Omega}\left\langle \left( \mathbb{A}\left( \tilde{u}_{S}\right) -\mathbb{A}\left( u_{S}\right) \right) \nabla\tilde{u}_{S},\nabla v\right\rangle -\{ \lambda ({u}_{S}) - \lambda (\tilde{u}_{S})\}v \quad\forall v\in S.\cr \end{aligned}
\end{equation}
Taking $v=u_{S}-\tilde{u}_{S}$ would allow to conclude as above since the
monotonicity implies
\[
\int_{\Omega}( \lambda({u}_{S}) - \lambda(\tilde{u}_{S}))(u_{S}-\tilde{u}_{S})
\geq0.
\]

\end{remark}

Using Theorem \ref{mcThm2} we can deduce a global result of existence for the
problem: given $F\in H^{-1}\left(  \Omega\right)  $:%
\begin{equation}
\text{find }u\in H_{0}^{1}\left(  \Omega\right)  \text{ such that}\quad
\int_{\Omega} \left\langle \mathbb{A}\left(  u\right)  \nabla u,\nabla
v\right\rangle +\lambda\left(  u\right)  uv =F\left(  v\right)  \quad\forall
v\in H_{0}^{1}\left(  \Omega\right)  . \label{mc14}%
\end{equation}

\begin{theorem}
\label{mcThm4}Suppose $\mathbb{A}$, $\lambda$ are functions from $H_{0}%
^{1}\left(  \Omega\right)  $ into $\mathbb{L}^{\infty}\left(  \Omega\right)
$, $L^{\infty}\left(  \Omega\right)  $ respectively satisfying the assumptions
(\ref{mc1prime}) and (\ref{mc13}). Then for $F\in H^{-1}\left(  \Omega\right)
$ there exists a solution $u$ to (\ref{mc14}).
\end{theorem}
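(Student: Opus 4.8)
The plan is to obtain $u$ as a limit of the finite-dimensional Galerkin solutions produced by Theorem \ref{mcThm2}. Since $H_{0}^{1}\left(  \Omega\right)  $ is separable, I would fix a countable dense family $\left(  \phi_{k}\right)  _{k\geq1}$ and set $S_{m}:=\operatorname{span}\left\{  \phi_{1},\dots,\phi_{m}\right\}  $, so that the $S_{m}$ form an increasing sequence of finite-dimensional subspaces whose union is dense in $H_{0}^{1}\left(  \Omega\right)  $. By assumption \eqref{mc13} the continuity needed to invoke Theorem \ref{mcThm2} is available on each $S_{m}$ (cf. the preceding remark), so each $S_{m}$ carries a Galerkin solution $u_{m}:=u_{S_{m}}\in S_{m}$ of \eqref{mc7}.

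The first step is a uniform a priori bound. Testing \eqref{mc7} with $v=u_{m}$ and using the coercivity in \eqref{mc1prime} together with $\lambda\geq0$, exactly as in \eqref{mc9}, gives $\alpha\left\Vert u_{m}\right\Vert ^{2}\leq F\left(  u_{m}\right)  \leq\left\Vert F\right\Vert _{\ast}\left\Vert u_{m}\right\Vert $, hence $\left\Vert u_{m}\right\Vert \leq\left\Vert F\right\Vert _{\ast}/\alpha$ independently of $m$. By reflexivity of $H_{0}^{1}\left(  \Omega\right)  $ and the compact embedding $H_{0}^{1}\left(  \Omega\right)  \hookrightarrow\hookrightarrow L^{2}\left(  \Omega\right)  $ (valid since $\Omega$ is bounded), a subsequence, not relabelled, satisfies $u_{m}\rightharpoonup u$ in $H_{0}^{1}\left(  \Omega\right)  $ and $u_{m}\rightarrow u$ in $L^{2}\left(  \Omega\right)  $ for some $u\in H_{0}^{1}\left(  \Omega\right)  $.

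The heart of the argument is passing to the limit. Fix $m_{0}$ and any $v\in S_{m_{0}}$; for every $m\geq m_{0}$ the function $u_{m}$ satisfies \eqref{mc7} with this test function, since $S_{m_{0}}\subset S_{m}$. From $u_{m}\rightarrow u$ in $L^{2}\left(  \Omega\right)  $ and assumption \eqref{mc13} I get $\mathbb{A}\left(  u_{m}\right)  \rightarrow\mathbb{A}\left(  u\right)  $ and $\lambda\left(  u_{m}\right)  \rightarrow\lambda\left(  u\right)  $ a.e. in $\Omega$; combined with the uniform $L^{\infty}$ bounds in \eqref{mc1prime} and dominated convergence, this yields $\mathbb{A}\left(  u_{m}\right)  ^{\top}\nabla v\rightarrow\mathbb{A}\left(  u\right)  ^{\top}\nabla v$ and $\lambda\left(  u_{m}\right)  v\rightarrow\lambda\left(  u\right)  v$ strongly in $L^{2}\left(  \Omega\right)  $. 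The diffusion term is then handled by the standard weak-strong pairing: writing $\left\langle \mathbb{A}\left(  u_{m}\right)  \nabla u_{m},\nabla v\right\rangle =\left\langle \nabla u_{m},\mathbb{A}\left(  u_{m}\right)  ^{\top}\nabla v\right\rangle $, the weak $L^{2}$ convergence of $\nabla u_{m}$ tested against the strong $L^{2}$ convergence of $\mathbb{A}\left(  u_{m}\right)  ^{\top}\nabla v$ forces $\int_{\Omega}\left\langle \mathbb{A}\left(  u_{m}\right)  \nabla u_{m},\nabla v\right\rangle \rightarrow\int_{\Omega}\left\langle \mathbb{A}\left(  u\right)  \nabla u,\nabla v\right\rangle $. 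For the reaction term, both $u_{m}\rightarrow u$ and $\lambda\left(  u_{m}\right)  v\rightarrow\lambda\left(  u\right)  v$ strongly in $L^{2}\left(  \Omega\right)  $, so their product converges in $L^{1}\left(  \Omega\right)  $ and $\int_{\Omega}\lambda\left(  u_{m}\right)  u_{m}v\rightarrow\int_{\Omega}\lambda\left(  u\right)  uv$. Passing to the limit in \eqref{mc7} therefore yields the weak identity for $u$ against every $v\in S_{m_{0}}$, and since $m_{0}$ is arbitrary, against every $v\in\bigcup_{m}S_{m}$.

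Finally, both sides of \eqref{mc14} are bounded linear functionals of $v$: the left-hand side is controlled by a constant times $\left\Vert u\right\Vert \left\Vert v\right\Vert $ via Cauchy-Schwarz, the $L^{\infty}$ bounds, and the Poincaré inequality, while the right-hand side is $F\in H^{-1}\left(  \Omega\right)  $. Hence the identity extends from the dense set $\bigcup_{m}S_{m}$ to all of $H_{0}^{1}\left(  \Omega\right)  $ by continuity, giving a solution $u$ to \eqref{mc14}. I expect the only genuinely delicate point to be the passage to the limit in the diffusion term: one must feed the a.e. convergence of the coefficients from \eqref{mc13}, which itself relies on having the compactness-based strong $L^{2}$ convergence of $u_{m}$ rather than mere weak convergence, into the weak-strong product argument, since neither factor of $\left\langle \mathbb{A}\left(  u_{m}\right)  \nabla u_{m},\nabla v\right\rangle $ converges strongly in $H_{0}^{1}\left(  \Omega\right)  $ on its own.
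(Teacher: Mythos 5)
Your proposal is correct and follows essentially the same route as the paper: a Galerkin sequence on finite-dimensional subspaces with dense union, the uniform energy bound $\left\Vert u_{m}\right\Vert \leq\left\Vert F\right\Vert _{\ast}/\alpha$, weak $H_{0}^{1}$ and strong $L^{2}$ compactness, passage to the limit using (\ref{mc13}), and a final density argument. You merely spell out details the paper leaves implicit, in particular the nestedness of the $S_{m}$ (so that a fixed test function is admissible for all large $m$) and the weak--strong pairing in the diffusion term.
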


%

\proof
Denote by $\left(  S_{n}\right)  _{n}$ a sequence of finite dimensional spaces
of $H_{0}^{1}\left(  \Omega\right)  $ such that $%
{\displaystyle\bigcup_{n}}
S_{n}$ is dense in $H_{0}^{1}\left(  \Omega\right)  $. Theorem \ref{mcThm2}
implies that there exists a solution to
\begin{equation}
\text{find }u_{n}\in S_{n}\text{ such that}\quad\int_{\Omega} \left\langle
\mathbb{A}\left(  u_{n}\right)  \nabla u_{n},\nabla v\right\rangle
+\lambda\left(  u_{n}\right)  u_{n}v=F\left(  v\right)  \quad\forall v\in
S_{n}. \label{mc15}%
\end{equation}
Moreover taking $v=u_{n}$ one deduces easily as in the proof of Theorem
\ref{mcThm2} that $\left\Vert u_{n}\right\Vert \leq\left\Vert F\right\Vert
_{\ast}/\alpha$. Thus, up to a subsequence, one has for some $u\in H_{0}%
^{1}\left(  \Omega\right)  $%
\[
u_{n}\rightharpoonup u\quad\text{in }H_{0}^{1}\left(  \Omega\right)  ,\quad
u_{n}\rightarrow u\quad\text{in }L^{2}\left(  \Omega\right)  .
\]
This allows us to pass to the limit in (\ref{mc15}) and to conclude that $u$
satisfies%
\[
\int_{\Omega} \left\langle \mathbb{A}\left(  u\right)  \nabla u,\nabla
v\right\rangle +\lambda\left(  u\right)  uv =F\left(  v\right)  \quad\forall
v\in S_{n}\quad\forall n\in\mathbb{N}.
\]
By density of $\left(  S_{n}\right)  _{n}$ in $H_{0}^{1}\left(  \Omega\right)
$ the existence of a solution to (\ref{mc14}) follows.%
\endproof

\begin{remark}
Under the assumption (\ref{mc11}), (\ref{mc12}), $\lambda=0$, the problem
(\ref{mc14}) admits a unique solution. The proof of uniqueness is identical to
the proof of Theorem \ref{mcThm3}.
\end{remark}

\section{Numerical solution\label{SecNumSol}}

Suppose we are interested to find an approximate solution to (\ref{mc3}). If
we solve (\ref{mc4}) the problem reduces to find the corresponding solution
$u_{\mu}$ to (\ref{mc3.6}), i.e., to solve numerically a linear Poisson equation.

Problem (\ref{mc4}) is a fixed point equation for the function $G\left(
x\right)  :=\ell\left(  u_{x}\right)  $. Then a suitable algorithm is to
consider the sequence $\left(  x^{n}\right)  _{n}$ in $\mathbb{R}$ defined as%
\begin{equation}
\left\{
\begin{array}
[c]{ll}%
x^{0} & \text{starting guess,}\\
x^{n+1}=G\left(  x^{n}\right)  & n=0,1,\ldots
\end{array}
\right.  \label{mc16}%
\end{equation}
Let $\mu_{0}$ be a fixed point for $G$ and assume that the situation is as
illustrated in Figure \ref{Fig1},%
\begin{figure}[ptb]%
\centering
\includegraphics[
height=3.6737in,
width=3.6737in
]%
{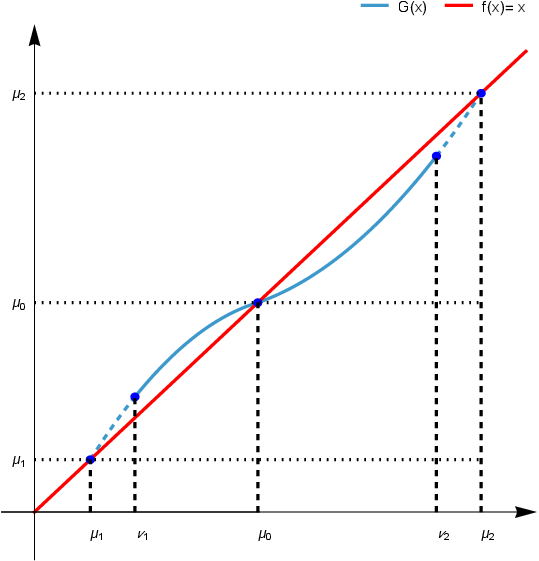}%
\caption{Illustration of the left- and right-hand side of the fixed point
equation $x=G\left(  x\right)  $ under the assumptions stated in
(\ref{mc17}).}%
\label{Fig1}%
\end{figure}
i.e., we suppose that, for some $\nu_{1}<\mu_{0}<\nu_{2}:$%
\begin{equation}
x<G\left(  x\right)  <\mu_{0}\quad\forall x\in\left]  \nu_{1},\mu_{0}\right[
\quad\text{and\quad}\mu_{0}<G\left(  x\right)  <x\quad\forall x\in\left]
\mu_{0},\nu_{2}\right[  . \label{mc17}%
\end{equation}
Then, for any $x^{0}\in\left]  \nu_{1},\nu_{2}\right[  $ the sequence
(\ref{mc16}) converges toward $\mu_{0}$. Indeed, suppose for instance that%
\[
x^{0}\in\left]  \nu_{1},\mu_{0}\right[  \text{,\quad then\quad}\nu_{1}%
<x^{0}<G\left(  x^{0}\right)  =x^{1}<\mu_{0},\quad\text{i.e.,\quad}x^{1}%
\in\left]  \nu_{1},\mu_{0}\right[  .
\]
For an induction argument, one supposes $x^{n}\in\left]  \nu_{1},\mu
_{0}\right[  $ and obtains%
\[
\nu_{1}<x^{n}<G\left(  x^{n}\right)  =x^{n+1}<\mu_{0}.
\]
Clearly, $\left(  x^{n}\right)  _{n}$ is an increasing sequence which can only
converge toward $\mu_{0}$. With a similar argument it follows that for
$x^{0}\in\left]  \mu_{0},\nu_{2}\right[  $, $\left(  x^{n}\right)  _{n}$ is a
decreasing sequence toward $\mu_{0}$.

If one imagines that $G$ has two other fixed points $\mu_{1}$, $\mu_{2}$ such
that $\mu_{1}<\mu_{0}<\mu_{2}$, one sees that the assumption $G\left(
x\right)  >x$ on the left of $x^{0}$ is necessary to have convergence of
$\left(  x^{n}\right)  _{n}$. Indeed if $x^{0}$ is on the left of $\mu_{2}$,
then $\left(  x^{n}\right)  _{n}$ does not converge toward $\mu_{2}$ but
toward $\mu_{0}$. Similarly, the conditions $G\left(  x\right)  <\mu_{0}$ on
the left of $\mu_{0}$ and $G\left(  x\right)  >\mu_{0}$ on the right of
$\mu_{0}$ are necessary to have convergence of the sequence $\left(
x^{n}\right)  _{n}$. Indeed, consider a function $G$ defined on $\left]
\nu_{1},\nu_{2}\right[  $ and such that for a point $x^{0}\in\left]  \nu
_{1},\mu_{0}\right[  $ one has $x^{1}=G\left(  x^{0}\right)  \in\left]
\mu_{0},\nu_{2}\right[  $ and $G\left(  x^{1}\right)  =x^{0}<\mu_{0}$. It is
easy to construct a function $G$ satisfying these conditions. This case is
illustrated in Figure \ref{Fig2}.
\begin{figure}[ptb]%
\centering
\includegraphics[
height=3.851in,
width=3.6668in
]%
{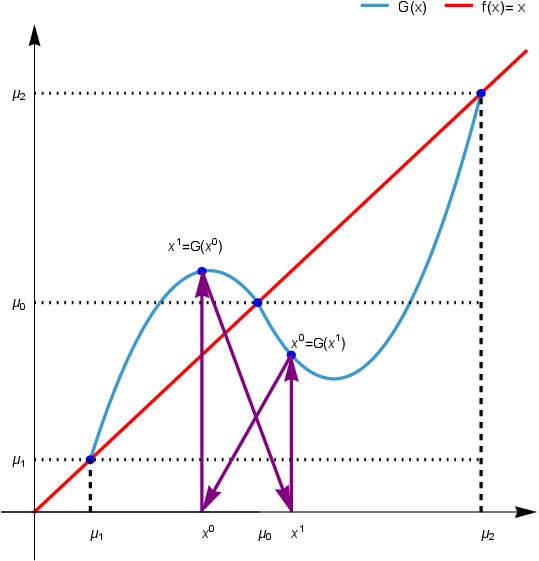}%
\caption[ ]{ }%
\label{Fig2}%
\end{figure}
Then, clearly the sequence (\ref{mc16}) is going to oscillate between $x^{0}$
and $x^{1}$ without converging.

Moreover, if one has%
\begin{equation}
x<G\left(  x\right)  <\mu_{2}\text{\quad on\ }\left]  \mu_{1},\mu_{0}\right[
\quad\text{and\quad}\mu_{1}<G\left(  x\right)  <x\quad\text{on }\left]
\mu_{0},\mu_{2}\right[  \label{mc18}%
\end{equation}
then the stability of the sequence (\ref{mc16}) is insured in the sense that%
\[
x^{0}\in\left]  \mu_{1},\mu_{2}\right[  \implies x^{n}\in\left]  \mu_{1}%
,\mu_{2}\right[  .
\]
Indeed, assuming $x^{0}\in\left]  \mu_{1},\mu_{0}\right[  $ one has by
(\ref{mc18})%
\[
\mu_{1}<x^{0}<G\left(  x^{0}\right)  <\mu_{2}\quad\text{i.e., }x^{1}\in\left]
\mu_{1},\mu_{2}\right[
\]
and if $x^{0}\in\left]  \mu_{0},\mu_{2}\right[  $%
\[
\mu_{1}<G\left(  x^{0}\right)  <x^{0}<\mu_{2}\quad\text{i.e., }x^{1}\in\left]
\mu_{1},\mu_{2}\right[  .
\]
By induction it is easy to show that $x^{n}\in\left]  \mu_{1},\mu_{2}\right[
$ for all $n$. This stability might be lost if%
\begin{equation}
\text{for some }\nu_{1}\in\left]  \mu_{1},\mu_{0}\right[  \qquad G\left(
\nu_{1}\right)  >\mu_{2}, \label{stas_instab}%
\end{equation}
i.e., if the range of $G$ leaves the interval $\left]  \mu_{1},\mu_{2}\right[
$ (see Figure \ref{Fig3}).%
\begin{figure}[ptb]%
\centering
\includegraphics[
height=3.851in,
width=3.6668in
]%
{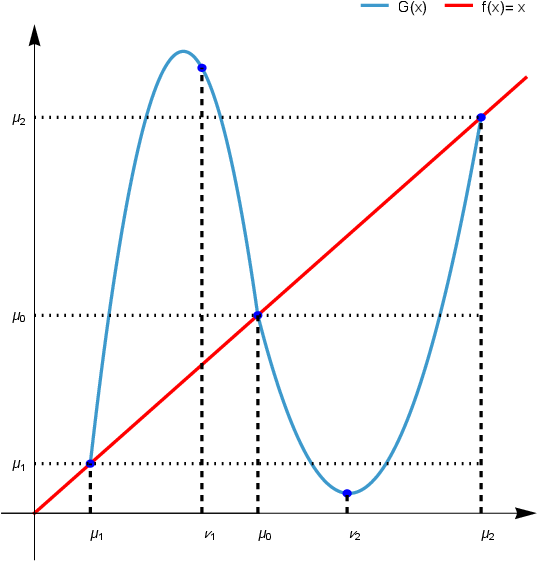}%
\caption[ ]{ }%
\label{Fig3}%
\end{figure}

To implement the solution method one can proceed in the following way. Let us
denote by $S$ a finite dimensional subspace of $H_{0}^{1}\left(
\Omega\right)  $. Suppose first that we are interested in solving (\ref{mc3})
or equivalently (\ref{mc4}). Let $u_{S,\mu}$ denote the solution to the
Galerkin discretization of (\ref{mc3.6})%
\begin{equation}
\text{find }u_{S,\mu}\in S\text{ such that}\quad%
{\displaystyle\int_{\Omega}}
\left\langle \mathbb{A}\left(  \cdot,\mu\right)  \nabla u_{S,\mu},\nabla
v\right\rangle +\lambda\left(  \cdot,\mu\right)  u_{S,\mu}v=F\left(  v\right)
\quad\forall v\in S. \label{mc19}%
\end{equation}
The related nonlinear discrete problem is given by%
\begin{equation}
\text{find }u_{S}\in S\text{ such that}\quad%
{\displaystyle\int_{\Omega}}
\left\langle \mathbb{A}\left(  \cdot,\ell\left(  u_{S}\right)  \right)  \nabla
u_{S},\nabla v\right\rangle +\lambda\left(  \cdot,\ell\left(  u_{S}\right)
\right)  u_{S}v=F\left(  v\right)  \quad\forall v\in S. \label{mc19nonlin}%
\end{equation}
Then, the discrete version of the fixed point equation in $\mathbb{R}$ is
given by%
\begin{equation}
\text{find }\mu_{S}\in\mathbb{R\quad}\text{such that\quad}\mu_{S}=G_{S}\left(
\mu_{S}\right)  \quad\text{with\quad}G_{S}\left(  x\right)  :=\ell\left(
u_{S,x}\right)  \label{defmueS}%
\end{equation}
and the corresponding fixed point iteration is%
\begin{equation}
\left\{
\begin{array}
[c]{ll}%
x_{S}^{0} & \text{starting guess,}\\
x_{S}^{n+1}=G_{S}\left(  x_{S}^{n}\right)  & n=0,1,\ldots
\end{array}
\right.  \label{mc16b}%
\end{equation}
With the shorthand $u_{S}^{n}:=u_{S,x_{S}^{n}}$ this iteration can be
reformulated by choosing some $u_{S}^{0}\in S$ as the starting guess and then
solving%
\[
\text{find }u_{S}^{n+1}\in S\text{ such that}\quad%
{\displaystyle\int_{\Omega}}
\left\langle \mathbb{A}\left(  \cdot,\ell\left(  u_{S}^{n}\right)  \right)
\nabla u_{S}^{n+1},\nabla v\right\rangle +\lambda\left(  \cdot,\ell\left(
u_{S}^{n}\right)  \right)  u_{S}^{n+1}v=F\left(  v\right)  \quad\forall v\in
S.
\]
This iteration again simplifies for coefficients $\mathbb{A}$, $\lambda$ as in
(\ref{mc5}), (\ref{mc6}). The linear Poisson problem
\begin{equation}
\text{find }\psi_{S}\in S\text{ such that}\quad%
{\displaystyle\int_{\Omega}}
\left\langle \nabla\psi_{S},\nabla v\right\rangle =F\left(  v\right)
\quad\forall v\in S, \label{discretePoisson}%
\end{equation}
delivers $\Psi_{S}$. After having found an approximation $x_{S}^{n}$ of the
solution to the fixed point equation in $\mathbb{R}:$%
\begin{equation}
x=\frac{\ell\left(  \psi_{S}\right)  }{a\left(  x\right)  ^{p}}, \label{mc20}%
\end{equation}
an approximation of (\ref{mc3}) is given in this case by%
\[
u_{S}^{n}=\psi_{S}/a\left(  x_{S}^{n}\right)  .
\]

\begin{remark}
The reasoning concerning the convergence of the fixed point iteration
(\ref{mc16}) in the beginning of this section could be applied verbatim to its
discrete version (\ref{mc16b}). For the simplified situation in (\ref{mc20}),
the verification of the sign conditions, e.g., in (\ref{mc17}) is feasible.
\end{remark}

\begin{remark}
If $a\left(  \cdot\right)  $ satisfies%
\[
0<\alpha\leq a\left(  r\right)  \leq\beta\quad\forall r\in\mathbb{R}%
\]
then, (\ref{mc20}) admits always at least one solution.
\end{remark}

\section{Error analysis}

We suppose in this section that we are in the framework of section 2. In
particular there exists a unique solution $u=u_{\mu}$ to
\begin{equation}%
\begin{cases}
u_{\mu}\in H_{0}^{1}(\Omega),\cr\int_{\Omega}\left\langle \mathbb{A}\left(
\cdot,\mu\right)  \nabla u_{\mu},\nabla v\right\rangle +\lambda\left(
\cdot,\mu\right)  u_{\mu}v=F\left(  v\right)  \quad\forall v\in H_{0}%
^{1}\left(  \Omega\right)  .\cr
\end{cases}
\label{5.1}%
\end{equation}
We denote by $\mathcal{L}_{\mu}^{-1}$ the continuous solution operator from
$H^{-1}(\Omega)$ into $H_{0}^{1}(\Omega)$ such that
\[
\mathcal{L}_{\mu}^{-1}F=u_{\mu}.
\]
Similarly for any finite dimensional subspace $S$ of $H_{0}^{1}(\Omega)$ there
exists a unique $u_{S,\mu}$ such that
\begin{equation}%
\begin{cases}
u_{S,\mu}\in S,\cr\int_{\Omega}\left\langle \mathbb{A}\left(  \cdot
,\mu\right)  \nabla u_{S,\mu},\nabla v\right\rangle +\lambda\left(  \cdot
,\mu\right)  u_{S,\mu}v=F\left(  v\right)  \quad\forall v\in S.\cr
\end{cases}
\label{5.2}%
\end{equation}
Next we analyse the discretization error. We denote by $C_{\operatorname*{P}}$
the Poincar\'{e}-Friedrichs constant in $\left\Vert u\right\Vert
_{L^{2}\left(  \Omega\right)  }\leq C_{\operatorname*{P}}\left\Vert
u\right\Vert $ for all $u\in H_{0}^{1}\left(  \Omega\right)  $. Let
$\left\Vert u\right\Vert _{H^{1}\left(  \Omega\right)  }:=\left(  \left\Vert
u\right\Vert ^{2}+\left\Vert u\right\Vert _{L^{2}\left(  \Omega\right)  }%
^{2}\right)  ^{1/2}$ and note that $\left\Vert u\right\Vert \leq\left\Vert
u\right\Vert _{H^{1}\left(  \Omega\right)  }\leq\left(  1+C_{\operatorname*{P}%
}^{2}\right)  ^{1/2}\left\Vert u\right\Vert $. We need a further assumption on
$\ell:H_{0}^{1}\left(  \Omega\right)  \rightarrow\mathbb{R}$ and on the
coefficients $\mathbb{A}$ and $\lambda$.

\begin{assumption}
\label{AssumpLocLip}The mappings $\mathbb{A}:\Omega\times\mathbb{R}%
\rightarrow\mathbb{R}^{n\times n}$, $\lambda:\Omega\times\mathbb{R}%
\rightarrow\mathbb{R}$ as in (\ref{mc1})-(\ref{mc2.6}) and $\ell:H_{0}%
^{1}\left(  \Omega\right)  \rightarrow\mathbb{R}$ are locally Lipschitz
continuous, more precisely, there exist constants $C_{\mathbb{A}}\left(
R\right)  $, $C_{\lambda}\left(  R\right)  $, $C_{\ell}\left(  R\right)  $
depending on $R>0$ such that%
\begin{equation}
\label{LipA}\begin{aligned} \left\Vert \mathbb{A}\left( \mathbf{x},\nu\right) -\mathbb{A}\left( \mathbf{x},\tilde{\nu}\right) \right\Vert _{2} & \leq C_{\mathbb{A}}\left( R\right) \left\vert \nu-\tilde{\nu}\right\vert \quad\text{a.e. }\mathbf{x} \in\Omega,\text{\quad}\\ &~~~~~~~~~~~~~~~~~~~~~~~~\forall\nu,\tilde{\nu}\in I_{R}:=\left\{ \mu \in\mathbb{R}\mid\left\vert \mu\right\vert \leq R\right\} ,\\ \left\vert \lambda\left( \mathbf{x},\nu\right) -\lambda\left( \mathbf{x},\tilde{\nu}\right) \right\vert & \leq C_{\lambda}\left( R\right) \left\vert \nu-\tilde{\nu}\right\vert \quad\text{a.e. }\mathbf{x}\in\Omega,\text{\quad}\forall\nu,\tilde{\nu}\in I_{R},\\ \left\vert \ell\left( w\right) -\ell\left( \tilde{w}\right) \right\vert & \leq C_{\ell}\left( R\right) \left\Vert w-\tilde{w}\right\Vert \quad\forall w,\tilde{w}\in B_{R}:=\left\{ v\in H_{0}^{1}\left( \Omega\right) \mid\left\Vert v\right\Vert \leq R\right\}. \end{aligned}
\end{equation}

\end{assumption}

C\'{e}a's lemma (see, e.g., \cite[Thm. 2.4.1]{CiarletPb}, \cite[(2.8.1)
Theorem]{scottbrenner3}, \cite[Thm. 32.2]{ErnGuermondII}) implies the
quasi-optimal estimate of the error between the solutions $u_{\mu}$ and
$u_{S,\mu}$ of (\ref{5.1}) and (\ref{5.2}), respectively:%
\begin{equation}
\left\Vert u_{\mu}-u_{S,\mu}\right\Vert \leq\frac{\beta}{\alpha}\eta_{\mu
,S}\left(  F\right)  \left\Vert F\right\Vert _{\ast}, \label{GalApprox}%
\end{equation}
where the approximation property $\eta_{\mu,S}\left(  F\right)  $ is defined
by $\eta_{\mu,S}\left(  0\right)  :=0$ and for $F\neq0$ by%
\begin{equation}
\eta_{\mu,S}\left(  F\right)  :=\frac{\inf_{w\in S}\left\Vert \mathcal{L}%
_{\mu}^{-1}F-w\right\Vert }{\left\Vert F\right\Vert _{\ast}}.
\label{defapproxprop}%
\end{equation}

\begin{lemma}
\label{Lem5.2}Let Assumption \ref{AssumpLocLip} be satisfied with the
constants $C_{\ell}$, $C_{\mathbb{A}}$, $C_{\lambda}$ therein. Let $F\in
H^{-1}\left(  \Omega\right)  $ and let $u_{\mu}$, $u_{S,\mu}$ denote the
solutions to (\ref{5.1}), (\ref{5.2}), respectively. Let $\mu,\mu_{S}$ denote
fixpoints of $\mu=\ell\left(  u_{\mu}\right)  $ and $\mu_{S}=\ell\left(
u_{S,\mu_{S}}\right)  $, respectively. Then,%
\begin{equation}
\begin{aligned} \left\vert \ell\left( u_{\mu}\right) -\ell\left( u_{S,\mu}\right) \right\vert \leq C_{\ell}\left( R_{F}\right) \left\Vert u_{\mu}-u_{S,\mu }\right\Vert & \text{~~with\quad}R_{F}:=\left\Vert F\right\Vert _{\ast}/\alpha,\\ \quad & \\ \left\Vert \mathbb{A}\left( \mathbf{x},\mu\right) -\mathbb{A}\left( \mathbf{x},\mu_{S}\right) \right\Vert _{2}\leq C_{\mathbb{A}}\left( D_{F}\right) \left\vert \mu-\mu_{S}\right\vert & \text{ ~~a.e. }\mathbf{x}\in\Omega\\ &\text{ with }D_{F}:=R_{F}C_{\ell}\left( R_{F}\right) +\left\vert \ell\left( 0\right) \right\vert ,\\ \quad & \\ \left\vert \lambda\left( \mathbf{x},\mu\right) -\lambda\left( \mathbf{x},\mu_{S}\right) \right\vert \leq C_{\lambda}\left( D_{F}\right) \left\vert \mu-\mu_{S}\right\vert & \text{~~a.e. }\mathbf{x}\in\Omega. \end{aligned} \label{LipEstAll}%
\end{equation}

\end{lemma}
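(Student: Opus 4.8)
The plan is to reduce all three estimates to the uniform a~priori energy bound on the solutions, combined with the local Lipschitz properties collected in Assumption~\ref{AssumpLocLip}; the only substantive task is to identify the correct radius entering each inequality.

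First I would record the energy bound. Testing \eqref{5.1} with $v=u_{\mu}$, and \eqref{5.2} with $v=u_{S,\mu}$ and with $v=u_{S,\mu_{S}}$, the coercivity lower bound in \eqref{mc1} together with $\lambda\geq0$ from \eqref{mc2.6} (which lets me discard the nonnegative reaction term) gives $\alpha\left\Vert u_{\mu}\right\Vert ^{2}\leq F(u_{\mu})\leq\left\Vert F\right\Vert _{\ast}\left\Vert u_{\mu}\right\Vert $ and the analogous inequalities for the Galerkin solutions. This is the computation already performed in \eqref{mc4.3}--\eqref{mc4.4} and \eqref{mc9}, and it yields $\left\Vert u_{\mu}\right\Vert ,\left\Vert u_{S,\mu}\right\Vert ,\left\Vert u_{S,\mu_{S}}\right\Vert \leq R_{F}$ with $R_{F}=\left\Vert F\right\Vert _{\ast}/\alpha$, so that all three functions lie in $B_{R_{F}}$. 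The first inequality in \eqref{LipEstAll} is then immediate: since $u_{\mu},u_{S,\mu}\in B_{R_{F}}$, the third line of \eqref{LipA} applied with $w=u_{\mu}$ and $\tilde{w}=u_{S,\mu}$ gives exactly $\left\vert \ell(u_{\mu})-\ell(u_{S,\mu})\right\vert \leq C_{\ell}(R_{F})\left\Vert u_{\mu}-u_{S,\mu}\right\Vert $.

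Next I would bound the two fixpoints so as to place them in the right interval. Since $\mu=\ell(u_{\mu})$, writing $\ell(u_{\mu})=\bigl(\ell(u_{\mu})-\ell(0)\bigr)+\ell(0)$ and applying the local Lipschitz bound on $B_{R_{F}}$ with $w=u_{\mu}$ and $\tilde{w}=0$ yields $\left\vert \mu\right\vert \leq C_{\ell}(R_{F})\left\Vert u_{\mu}\right\Vert +\left\vert \ell(0)\right\vert \leq R_{F}C_{\ell}(R_{F})+\left\vert \ell(0)\right\vert =D_{F}$. The identical argument applied to $\mu_{S}=\ell(u_{S,\mu_{S}})$ gives $\left\vert \mu_{S}\right\vert \leq D_{F}$, hence $\mu,\mu_{S}\in I_{D_{F}}$. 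The remaining two inequalities then follow directly from the first two lines of \eqref{LipA} evaluated on $I_{D_{F}}$, with the constants $C_{\mathbb{A}}(D_{F})$ and $C_{\lambda}(D_{F})$ respectively.

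There is essentially no analytic difficulty; the argument is pure bookkeeping of radii. The one point that requires care is the definition $D_{F}=R_{F}C_{\ell}(R_{F})+\left\vert \ell(0)\right\vert $: it must simultaneously dominate both $\left\vert \ell(u_{\mu})\right\vert $ and $\left\vert \ell(u_{S,\mu_{S}})\right\vert $, which is precisely why the comparison is made against $\ell(0)$ and why the energy bound $R_{F}$ must be valid for the continuous as well as the discrete problem — the latter resting on the nonnegativity of $\lambda$ used above.
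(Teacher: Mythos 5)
Your proposal is correct and follows essentially the same route as the paper: the energy bounds $\left\Vert u_{\mu}\right\Vert ,\left\Vert u_{S,\mu}\right\Vert \leq R_{F}$ give the first estimate via the local Lipschitz property of $\ell$, and the bound $\left\vert \mu\right\vert ,\left\vert \mu_{S}\right\vert \leq D_{F}$ obtained by comparing $\ell(u_{\mu})$ with $\ell(0)$ places the fixpoints in $I_{D_{F}}$, after which the remaining two estimates follow from Assumption \ref{AssumpLocLip}. This matches the paper's argument in (\ref{enerfyestimates}) and (\ref{estmodmue}) step for step.
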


%

\proof
By choosing $v=u_{\mu}$, $v=u_{S,\mu}$ in (\ref{5.1}), (\ref{5.2}),
respectively it follows%
\begin{equation}
\left\Vert u_{\mu}\right\Vert \leq\left\Vert F\right\Vert _{\ast}/\alpha
\quad\text{and\quad}\left\Vert u_{S,\mu}\right\Vert \leq\left\Vert
F\right\Vert _{\ast}/\alpha\label{enerfyestimates}%
\end{equation}
and the first claim is concluded from Assumption \ref{AssumpLocLip}.

The parameter $\mu$ is the solution of the fixed point equation $\mu
=\ell\left(  u_{\mu}\right)  $. Hence,%
\begin{equation}
\left\vert \mu\right\vert =\left\vert \ell\left(  u_{\mu}\right)  \right\vert
\leq\left\vert \ell\left(  u_{\mu}\right)  -\ell\left(  0\right)  \right\vert
+\left\vert \ell\left(  0\right)  \right\vert \leq C_{\ell}\left(
R_{F}\right)  \left\Vert u_{\mu}\right\Vert +\left\vert \ell\left(  0\right)
\right\vert \leq D_{F}. \label{estmodmue}%
\end{equation}
By similar arguments one obtains $\left\vert \mu_{S}\right\vert \leq D_{F}$.
Hence, Assumption \ref{AssumpLocLip} implies%
\begin{align*}
\left\Vert \mathbb{A}\left(  \mathbf{x},\mu\right)  -\mathbb{A}\left(
\mathbf{x},\mu_{S}\right)  \right\Vert _{2}  &  \leq C_{\mathbb{A}}\left(
D_{F}\right)  \left\vert \mu-\mu_{S}\right\vert \text{,}\\
\left\vert \lambda\left(  \mathbf{x},\mu\right)  -\lambda\left(
\mathbf{x},\mu_{S}\right)  \right\vert  &  \leq C_{\lambda}\left(
D_{F}\right)  \left\vert \mu-\mu_{S}\right\vert .
\end{align*}%
\endproof

\begin{theorem}
\label{ThmGalConv}Let Assumption \ref{AssumpLocLip} hold with the constants
$C_{\ell}$, $C_{\mathbb{A}}$, $C_{\lambda}$ therein. Let $R_{F}$, $D_{F}$ be
as in (\ref{LipEstAll}) and set%
\[
C_{\mathbb{A},\lambda}\left(  D_{F}\right)  :=\max\left\{  C_{\mathbb{A}%
}\left(  D_{F}\right)  ,C_{\lambda}\left(  D_{F}\right)  \right\}  .
\]
Suppose that the right-hand side $F\in H^{-1}\left(  \Omega\right)  $
satisfies%
\begin{equation}
C_{\mathbb{A},\lambda,\ell}^{F}\left\Vert F\right\Vert _{\ast}\leq
1\quad\text{with\quad}C_{\mathbb{A},\lambda,\ell}^{F}:=2C_{\ell}\left(
R_{F}\right)  \frac{C_{\mathbb{A},\lambda}\left(  D_{F}\right)  \left(
1+C_{\operatorname*{P}}^{2}\right)  }{\alpha^{2}}.
\label{smallnessconvergence}%
\end{equation}
Let $\mu$ be a fixed point of (\ref{mc4}) and $\mu_{S}$ one of (\ref{defmueS}%
). Then,
\begin{equation}
\left\vert \mu-\mu_{S}\right\vert \leq2C_{\ell}\left(  R_{F}\right)
\frac{\beta}{\alpha}\eta_{\mu,S}\left(  F\right)  \left\Vert F\right\Vert
_{\ast}. \label{Theofixedpointest}%
\end{equation}
The corresponding solutions $u_{\mu}$ and $u_{S,\mu_{S}}$ solve (\ref{5.1})
and (\ref{5.2}) and, in turn, the nonlinear problems (\ref{mc3}) and
(\ref{mc19nonlin}). It holds%
\begin{equation}
\left\Vert u_{\mu}-u_{S,\mu_{S}}\right\Vert \leq\frac{2}{\alpha}\eta_{\mu
,S}\left(  F\right)  \left\Vert F\right\Vert _{\ast}. \label{Theouest}%
\end{equation}

\end{theorem}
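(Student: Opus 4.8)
The plan is to establish the fixed-point error bound (\ref{Theofixedpointest}) first and then deduce the solution error bound (\ref{Theouest}) from it. The conceptual difficulty is that $u_{\mu}$ and $u_{S,\mu_{S}}$ solve the \emph{linear} problems (\ref{5.1}) and (\ref{5.2}) at \emph{different} parameters $\mu\neq\mu_{S}$, so the quasi-optimal estimate (\ref{GalApprox}) cannot be applied to their difference directly. To separate the genuine discretization error from the parameter-perturbation error, I would insert the intermediate function $u_{S,\mu}$ (the Galerkin solution at the continuous parameter $\mu$) and use the splitting
\[
\left\Vert u_{\mu}-u_{S,\mu_{S}}\right\Vert \leq\left\Vert u_{\mu}-u_{S,\mu}\right\Vert +\left\Vert u_{S,\mu}-u_{S,\mu_{S}}\right\Vert .
\]
The first summand is controlled by (\ref{GalApprox}); the second is a pure perturbation term in the parameter. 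Throughout I would rely on the a priori bounds $\left\Vert u_{\mu}\right\Vert ,\left\Vert u_{S,\mu}\right\Vert ,\left\Vert u_{S,\mu_{S}}\right\Vert \leq R_{F}$ from (\ref{enerfyestimates}) and $\left\vert \mu\right\vert ,\left\vert \mu_{S}\right\vert \leq D_{F}$ from (\ref{estmodmue}), which are exactly what justifies invoking the local Lipschitz constants $C_{\ell}(R_{F})$, $C_{\mathbb{A}}(D_{F})$, $C_{\lambda}(D_{F})$ of Lemma \ref{Lem5.2}.

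For the perturbation term I would subtract the two instances of (\ref{5.2}) and rearrange, writing $w:=u_{S,\mu}-u_{S,\mu_{S}}$, to obtain for all $v\in S$
\[
\int_{\Omega}\left\langle \mathbb{A}(\cdot,\mu)\nabla w,\nabla v\right\rangle +\lambda(\cdot,\mu)\,wv =-\int_{\Omega}\left\langle (\mathbb{A}(\cdot,\mu)-\mathbb{A}(\cdot,\mu_{S}))\nabla u_{S,\mu_{S}},\nabla v\right\rangle +(\lambda(\cdot,\mu)-\lambda(\cdot,\mu_{S}))u_{S,\mu_{S}}v .
\]
Testing with $v=w$, the left-hand side is bounded below by $\alpha\left\Vert w\right\Vert ^{2}$ using coercivity of $\mathbb{A}$ and $\lambda\geq0$. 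On the right I would apply the two Lipschitz estimates of Lemma \ref{Lem5.2}, then Cauchy--Schwarz, the Poincar\'{e}--Friedrichs inequality to convert the $L^{2}$ factors in the reaction term into energy norms (producing the factor $1+C_{\operatorname*{P}}^{2}$), and the energy bound $\left\Vert u_{S,\mu_{S}}\right\Vert \leq\left\Vert F\right\Vert _{\ast}/\alpha$. Dividing by $\left\Vert w\right\Vert $ gives
\[
\left\Vert u_{S,\mu}-u_{S,\mu_{S}}\right\Vert \leq\frac{C_{\mathbb{A},\lambda}(D_{F})(1+C_{\operatorname*{P}}^{2})\left\Vert F\right\Vert _{\ast}}{\alpha^{2}}\left\vert \mu-\mu_{S}\right\vert .
\]

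To close the fixed-point estimate I would start from $\left\vert \mu-\mu_{S}\right\vert =\left\vert \ell(u_{\mu})-\ell(u_{S,\mu_{S}})\right\vert $, apply the local Lipschitz continuity of $\ell$ (legitimate since both arguments lie in $B_{R_{F}}$), then insert the splitting above together with (\ref{GalApprox}) and the perturbation bound just derived. This produces an inequality of the form
\[
\left\vert \mu-\mu_{S}\right\vert \leq C_{\ell}(R_{F})\frac{\beta}{\alpha}\eta_{\mu,S}(F)\left\Vert F\right\Vert _{\ast}+\tfrac{1}{2}C_{\mathbb{A},\lambda,\ell}^{F}\left\Vert F\right\Vert _{\ast}\left\vert \mu-\mu_{S}\right\vert ,
\]
in which the coefficient of the self-referential term is precisely $\tfrac{1}{2}C_{\mathbb{A},\lambda,\ell}^{F}\left\Vert F\right\Vert _{\ast}$. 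The smallness hypothesis (\ref{smallnessconvergence}) forces this coefficient to be $\leq\tfrac{1}{2}$, so the term is absorbed into the left-hand side and the resulting factor $(1-\tfrac12)^{-1}=2$ yields (\ref{Theofixedpointest}). Estimate (\ref{Theouest}) then follows by substituting (\ref{Theofixedpointest}) back into the perturbation bound: using (\ref{smallnessconvergence}) once more shows that $\left\Vert u_{S,\mu}-u_{S,\mu_{S}}\right\Vert $ is no larger than the quasi-optimal contribution $\tfrac{\beta}{\alpha}\eta_{\mu,S}(F)\left\Vert F\right\Vert _{\ast}$, so adding the C\'{e}a term in the splitting gives twice the quasi-optimal bound.

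The step I expect to be the crux is the absorption argument, which is essentially a contraction estimate for the map $\mu\mapsto\ell(u_{S,\mu})$ around the continuous fixed point: one must bookkeep the radii $R_{F}$ and $D_{F}$ so that the correct Lipschitz constants are used, and verify that the coefficient of $\left\vert \mu-\mu_{S}\right\vert$ collected from the perturbation estimate matches $\tfrac12 C_{\mathbb{A},\lambda,\ell}^{F}\left\Vert F\right\Vert_{\ast}$ exactly, so that (\ref{smallnessconvergence}) delivers the clean constant $2$ rather than an unspecified larger one. The remaining manipulations (coercivity lower bound, Cauchy--Schwarz, Poincar\'{e}) are routine.
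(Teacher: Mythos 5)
Your proposal follows essentially the same route as the paper's proof: the splitting through the intermediate Galerkin solution $u_{S,\mu}$, C\'{e}a's lemma for the first summand, the coercivity/Lipschitz/Poincar\'{e} perturbation bound $\left\Vert u_{S,\mu}-u_{S,\mu_{S}}\right\Vert \leq\alpha^{-2}C_{\mathbb{A},\lambda}(D_{F})(1+C_{\operatorname*{P}}^{2})\left\Vert F\right\Vert _{\ast}\left\vert \mu-\mu_{S}\right\vert$ for the second, and the absorption of the self-referential term via (\ref{smallnessconvergence}), with the correct bookkeeping of the radii $R_{F}$ and $D_{F}$. The only cosmetic difference is that you place $\nabla u_{S,\mu_{S}}$ rather than $\nabla u_{S,\mu}$ in the perturbation term, which is immaterial since both satisfy the same energy bound.
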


%

\proof
Let $\mu$ be a fixed point of (\ref{mc4}) and $\mu_{S}$ one of (\ref{defmueS}%
). With the constant $C_{\ell}$ as in (\ref{LipA}) we get%
\begin{equation}
\begin{aligned} \left\vert \mu-\mu_{S}\right\vert =\left\vert \ell\left( u_{\mu}\right) -\ell\left( u_{S,\mu_{S}}\right) \right\vert& \leq C_{\ell}\left( R_{F}\right) \left\Vert u_{\mu}-u_{S,\mu_{S}}\right\Vert \cr &\leq C_{\ell}\left( R_{F}\right) \left( \left\Vert u_{\mu}-u_{S,\mu}\right\Vert +\left\Vert u_{S,\mu}-u_{S,\mu_{S}}\right\Vert \right).\label{5.8} \end{aligned}
\end{equation}
The first term can be estimated by (\ref{GalApprox}):%
\begin{equation}
\left\Vert u_{\mu}-u_{S,\mu}\right\Vert \leq\frac{\beta}{\alpha}\eta_{\mu
,S}\left(  F\right)  \left\Vert F\right\Vert _{\ast}. \label{est1}%
\end{equation}
For $\rho\in\mathbb{R}$, let the bilinear form $a_{\rho}:H^{1}\left(
\Omega\right)  \times H^{1}\left(  \Omega\right)  \rightarrow\mathbb{R}$ be
given by%
\[
a_{\rho}\left(  v,w\right)  :=%
{\displaystyle\int_{\Omega}}
\left\langle \mathbb{A}\left(  \cdot,\rho\right)  \nabla v,\nabla
w\right\rangle +\lambda\left(  \cdot,\rho\right)  vw.
\]
For the last term in (\ref{5.8}) we set $d_{S}:=u_{S,\mu}-u_{S,\mu_{S}}$ and
obtain for any $w\in S:$%
\begin{align*}
0  &  =a_{\mu}\left(  u_{S,\mu},w\right)  -a_{\mu_{S}}\left(  u_{S,\mu_{S}%
},w\right) \\
&  =\int_{\Omega}\left\langle \left(  \mathbb{A}\left(  \cdot,\mu\right)
-\mathbb{A}\left(  \cdot,\mu_{S}\right)  \right)  \nabla u_{S,\mu},\nabla
w\right\rangle +\left(  \lambda\left(  \cdot,\mu\right)  -\lambda\left(
\cdot,\mu_{S}\right)  \right)  u_{S,\mu}w\\
&  +\int_{\Omega}\left\langle \mathbb{A}\left(  \cdot,\mu_{S}\right)  \nabla
d_{S},\nabla w\right\rangle +\lambda\left(  \cdot,\mu_{S}\right)  d_{S}w.
\end{align*}
We choose $w=d_{S}$ and employ the coercivity to get%
\begin{align}
\alpha\left\Vert d_{S}\right\Vert ^{2}  &  \leq a_{\mu_{S}}\left(  d_{S}%
,d_{S}\right) \nonumber\\
&  =-\int_{\Omega}\left\langle \left(  \mathbb{A}\left(  \cdot,\mu\right)
-\mathbb{A}\left(  \cdot,\mu_{S}\right)  \right)  \nabla u_{S,\mu},\nabla
d_{S}\right\rangle +\left(  \lambda\left(  \cdot,\mu\right)  -\lambda\left(
\cdot,\mu_{S}\right)  \right)  u_{S,\mu}d_{S}. \label{dsline2}%
\end{align}
We use the Lipschitz estimate (\ref{LipEstAll}) with $C_{\mathbb{A},\lambda
}\left(  D_{F}\right)  $ as in (\ref{smallnessconvergence}) to obtain for all
$v,w\in S:$%
\begin{align}
&  \left\vert \int_{\Omega}\left\langle \left(  \mathbb{A}\left(  \cdot
,\mu\right)  -\mathbb{A}\left(  \cdot,\mu_{S}\right)  \right)  \nabla v,\nabla
w\right\rangle +\left(  \lambda\left(  \cdot,\mu\right)  -\lambda\left(
\cdot,\mu_{S}\right)  \right)  vw\right\vert \nonumber\\
&  \qquad\leq C_{\mathbb{A},\lambda}\left(  D_{F}\right)  \left\Vert
v\right\Vert _{H^{1}\left(  \Omega\right)  }\left\Vert w\right\Vert
_{H^{1}\left(  \Omega\right)  }\left\vert \mu-\mu_{S}\right\vert \leq
C_{\mathbb{A},\lambda}\left(  D_{F}\right)  \left(  1+C_{\operatorname*{P}%
}^{2}\right)  \left\Vert v\right\Vert \left\Vert w\right\Vert \left\vert
\mu-\mu_{S}\right\vert . \label{adiffmue}%
\end{align}
An application of this to the right-hand side in (\ref{dsline2}) leads to%
\begin{equation}
\alpha\left\Vert d_{S}\right\Vert ^{2}\leq C_{\mathbb{A},\lambda}\left(
D_{F}\right)  \left(  1+C_{\operatorname*{P}}^{2}\right)  \left\Vert u_{S,\mu
}\right\Vert \left\Vert d_{S}\right\Vert \left\vert \mu-\mu_{S}\right\vert .
\label{dsfinalest}%
\end{equation}
The stability estimate $\left\Vert u_{S,\mu}\right\Vert \leq\left\Vert
F\right\Vert _{\ast}/\alpha$ implies%
\begin{equation}
\left\Vert d_{S}\right\Vert \leq\frac{C_{\mathbb{A},\lambda}\left(
D_{F}\right)  \left(  1+C_{\operatorname*{P}}^{2}\right)  }{\alpha^{2}%
}\left\Vert F\right\Vert _{\ast}\left\vert \mu-\mu_{S}\right\vert .
\label{est2}%
\end{equation}
The combination of (\ref{5.8}), (\ref{est1}), and (\ref{est2}) leads to%
\[
\left\vert \mu-\mu_{S}\right\vert \leq C_{\ell}\left(  R_{F}\right)
\frac{\beta}{\alpha}\eta_{\mu,S}\left(  F\right)  \left\Vert F\right\Vert
_{\ast}+C_{\ell}\left(  R_{F}\right)  \frac{C_{\mathbb{A},\lambda}\left(
D_{F}\right)  \left(  1+C_{\operatorname*{P}}^{2}\right)  }{\alpha^{2}%
}\left\Vert F\right\Vert _{\ast}\left\vert \mu-\mu_{S}\right\vert .
\]
The assumption on the smallness of $F$ stated in the theorem implies%
\[
\left\vert \mu-\mu_{S}\right\vert \leq2C_{\ell}\left(  R_{F}\right)
\frac{\beta}{\alpha}\eta_{\mu,S}\left(  F\right)  \left\Vert F\right\Vert
_{\ast}.
\]
The estimate of the difference $u_{\mu}-u_{S,\mu_{S}}$ follows from the
previous estimates:%
\begin{align}
\left\Vert u_{\mu}-u_{S,\mu_{S}}\right\Vert  &  \leq\left\Vert u_{\mu
}-u_{S,\mu}\right\Vert +\left\Vert u_{S,\mu}-u_{S,\mu_{S}}\right\Vert
\nonumber\\
&  \leq\frac{\beta}{\alpha}\eta_{\mu,S}\left(  F\right)  \left\Vert
F\right\Vert _{\ast}+\frac{C_{\mathbb{A},\lambda}\left(  D_{F}\right)  \left(
1+C_{\operatorname*{P}}^{2}\right)  }{\alpha^{2}}\left\Vert F\right\Vert
_{\ast}\left\vert \mu-\mu_{S}\right\vert \nonumber\\
&  \leq\left(  1+2C_{\ell}\left(  R_{F}\right)  \frac{C_{\mathbb{A},\lambda
}\left(  D_{F}\right)  \left(  1+C_{\operatorname*{P}}^{2}\right)  }%
{\alpha^{2}}\left\Vert F\right\Vert _{\ast}\right)  \frac{\beta}{\alpha}%
\eta_{\mu,S}\left(  F\right)  \left\Vert F\right\Vert _{\ast}\nonumber\\
&  \leq2\frac{\beta}{\alpha}\eta_{\mu,S}\left(  F\right)  \left\Vert
F\right\Vert _{\ast}. \label{umueminusumueSmueS}%
\end{align}
%

\endproof

If the mapping $\ell$ enjoys more regularity a superconvergence result for the
fixed point error can be proved.

\begin{corollary}
\label{Corsuper}Let the assumptions in Theorem \ref{ThmGalConv} be satisfied.
Assume, in addition, the mapping $\ell:H_{0}^{1}\left(  \Omega\right)
\rightarrow\mathbb{R}$ is locally in $C^{1,1}$, more precisely, there exist
adjusted constants $C_{\ell}\left(  R\right)  $, $D_{F}\left(  R\right)  $
such that%
\[%
\begin{array}
[c]{ll}%
\left\Vert \ell^{\prime}\left(  w\right)  \right\Vert _{\ast}\leq D_{\ell
}\left(  R\right)  & \forall w\in B_{R},\\
\left\vert \ell\left(  w\right)  -\left(  \ell\left(  w_{0}\right)
+\ell^{\prime}\left(  w_{0}\right)  \left(  w-w_{0}\right)  \right)
\right\vert \leq C_{\ell}\left(  R\right)  \left\Vert w-w_{0}\right\Vert ^{2}
& \forall w,w_{0}\in B_{R}.
\end{array}
\]
Then, the error of the discrete fixed point satisfies (with $R_{F}$ as in
(\ref{LipEstAll}) and $D:=\ell^{\prime}\left(  u_{\mu}\right)  \in
H^{-1}\left(  \Omega\right)  $)%
\[
\left\vert \mu-\mu_{S}\right\vert \leq C\eta_{\mu,S}\left(  F\right)  \left(
\eta_{\mu,S}\left(  F\right)  +\eta_{\mu,S}\left(  D\right)  \right)
\]
for a constant $C$ which depends exclusively on the data $\alpha,\beta
,F,\ell,\mathbb{A}$.
\end{corollary}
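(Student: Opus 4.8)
The plan is to linearise the fixed-point error by the $C^{1,1}$ expansion of $\ell$ and then recover the extra power of the approximation property through an Aubin--Nitsche duality argument. Using the two fixed-point identities $\mu=\ell\left(u_{\mu}\right)$ and $\mu_{S}=\ell\left(u_{S,\mu_{S}}\right)$, I would expand $\ell$ about $w_{0}=u_{\mu}$ and evaluate at $w=u_{S,\mu_{S}}$, which gives
\[
\mu-\mu_{S}=\ell\left(u_{\mu}\right)-\ell\left(u_{S,\mu_{S}}\right)=\ell^{\prime}\left(u_{\mu}\right)\left(u_{\mu}-u_{S,\mu_{S}}\right)+R=D\left(u_{\mu}-u_{S,\mu_{S}}\right)+R,
\]
with a remainder controlled by $\left\vert R\right\vert\leq C_{\ell}\left(R_{F}\right)\left\Vert u_{\mu}-u_{S,\mu_{S}}\right\Vert^{2}$. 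By the energy bounds established in Lemma \ref{Lem5.2} both $u_{\mu}$ and $u_{S,\mu_{S}}$ lie in $B_{R_{F}}$, so the $C^{1,1}$ hypothesis applies with $R=R_{F}$, and inserting the bound (\ref{Theouest}) from Theorem \ref{ThmGalConv} yields $\left\vert R\right\vert\leq C\,\eta_{\mu,S}\left(F\right)^{2}$, already of the claimed quadratic order.

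The core of the proof is the linear term $D\left(u_{\mu}-u_{S,\mu_{S}}\right)$, which I would split along the parameter as $u_{\mu}-u_{S,\mu_{S}}=\left(u_{\mu}-u_{S,\mu}\right)+\left(u_{S,\mu}-u_{S,\mu_{S}}\right)$, separating the genuine Galerkin error at the fixed parameter $\mu$ from the discrete perturbation $d_{S}=u_{S,\mu}-u_{S,\mu_{S}}\in S$. For the Galerkin part I introduce the adjoint problem: find $z_{\mu}\in H_{0}^{1}\left(\Omega\right)$ such that $a_{\mu}\left(w,z_{\mu}\right)=D\left(w\right)$ for all $w\in H_{0}^{1}\left(\Omega\right)$, which is well posed by Lax--Milgram. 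Galerkin orthogonality $a_{\mu}\left(u_{\mu}-u_{S,\mu},v\right)=0$ for $v\in S$ then gives, for every $v\in S$,
\[
D\left(u_{\mu}-u_{S,\mu}\right)=a_{\mu}\left(u_{\mu}-u_{S,\mu},z_{\mu}\right)=a_{\mu}\left(u_{\mu}-u_{S,\mu},z_{\mu}-v\right),
\]
so that continuity of $a_{\mu}$ yields $\left\vert D\left(u_{\mu}-u_{S,\mu}\right)\right\vert\leq C\left\Vert u_{\mu}-u_{S,\mu}\right\Vert\inf_{v\in S}\left\Vert z_{\mu}-v\right\Vert$. The first factor is bounded by the quasi-optimal estimate (\ref{est1}) by $\frac{\beta}{\alpha}\eta_{\mu,S}\left(F\right)\left\Vert F\right\Vert_{\ast}$, and the second is precisely $\eta_{\mu,S}\left(D\right)\left\Vert D\right\Vert_{\ast}$ with $\left\Vert D\right\Vert_{\ast}=\left\Vert \ell^{\prime}\left(u_{\mu}\right)\right\Vert_{\ast}\leq D_{\ell}\left(R_{F}\right)$ bounded by the data; this produces the contribution $C\,\eta_{\mu,S}\left(F\right)\eta_{\mu,S}\left(D\right)$.

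It remains to handle the non-orthogonal term $D\left(d_{S}\right)$, for which duality is unavailable since $d_{S}\in S$. I would estimate it directly by $\left\vert D\left(d_{S}\right)\right\vert\leq\left\Vert D\right\Vert_{\ast}\left\Vert d_{S}\right\Vert$ and use the perturbation bound (\ref{est2}), namely $\left\Vert d_{S}\right\Vert\leq\frac{C_{\mathbb{A},\lambda}\left(D_{F}\right)\left(1+C_{\operatorname*{P}}^{2}\right)}{\alpha^{2}}\left\Vert F\right\Vert_{\ast}\left\vert\mu-\mu_{S}\right\vert$; this reproduces $\left\vert\mu-\mu_{S}\right\vert$ on the right with a prefactor proportional to $\left\Vert F\right\Vert_{\ast}$, which the smallness hypothesis (\ref{smallnessconvergence}) keeps strictly below one, so that it can be absorbed into the left-hand side. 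Collecting the three contributions then gives $\left\vert\mu-\mu_{S}\right\vert\leq C\,\eta_{\mu,S}\left(F\right)\left(\eta_{\mu,S}\left(F\right)+\eta_{\mu,S}\left(D\right)\right)$ with $C$ depending only on $\alpha,\beta,F,\ell,\mathbb{A}$. The hard part will be the duality step: since the diffusion matrix $\mathbb{A}$ need not be symmetric, the dual solution $z_{\mu}$ solves the transposed form rather than the primal problem with data $D$, and one must make sure that $\inf_{v\in S}\left\Vert z_{\mu}-v\right\Vert$ is genuinely measured by $\eta_{\mu,S}\left(D\right)$ in the sense of (\ref{defapproxprop}) (interpreting that approximation property via the dual problem), and that the smallness constant indeed permits the final absorption.
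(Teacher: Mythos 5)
Your proposal is correct and follows the same overall strategy as the paper --- linearisation of $\ell$ about $u_{\mu}$ plus an Aubin--Nitsche duality argument with the dual datum $D=\ell^{\prime}\left(  u_{\mu}\right)  $ --- but it organises the duality step differently. The paper applies the dual problem to the \emph{full} error $e_{S}=u_{S,\mu_{S}}-u_{\mu}$ and controls the resulting Galerkin-orthogonality defect $a_{\mu}\left(  e_{S},w_{S}\right)  $ through the coefficient-difference identity (\ref{adiffmue}), which yields a term proportional to $\left\vert \mu-\mu_{S}\right\vert $; you instead split $u_{\mu}-u_{S,\mu_{S}}=\left(  u_{\mu}-u_{S,\mu}\right)  +d_{S}$, exploit \emph{exact} Galerkin orthogonality for the first piece (so the textbook duality bound $\left\vert D\left(  u_{\mu}-u_{S,\mu}\right)  \right\vert \leq C\,\eta_{\mu,S}\left(  F\right)  \eta_{\mu,S}\left(  D\right)  $ applies cleanly), and bound $\left\vert D\left(  d_{S}\right)  \right\vert \leq\left\Vert D\right\Vert _{\ast}\left\Vert d_{S}\right\Vert $ via (\ref{est2}), absorbing the resulting multiple of $\left\vert \mu-\mu_{S}\right\vert $ into the left-hand side. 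The two routes are essentially equivalent in substance --- in both cases the ``non-orthogonal'' part of the error contributes a term of size $\left\Vert F\right\Vert _{\ast}\left\vert \mu-\mu_{S}\right\vert $ that must be absorbed --- but your version makes the absorption explicit, which is arguably cleaner than the paper's treatment of $a_{\mu}\left(  e_{S},w_{S}\right)  $. The one point you should nail down is the absorption constant: (\ref{smallnessconvergence}) gives $C_{\mathbb{A},\lambda}\left(  D_{F}\right)  \left(  1+C_{\operatorname*{P}}^{2}\right)  \alpha^{-2}\left\Vert F\right\Vert _{\ast}\leq\left(  2C_{\ell}\left(  R_{F}\right)  \right)  ^{-1}$, so your prefactor is $\leq\left\Vert D\right\Vert _{\ast}/\left(  2C_{\ell}\left(  R_{F}\right)  \right)  $; this is $\leq1/2$ only after observing that $\left\Vert D\right\Vert _{\ast}=\left\Vert \ell^{\prime}\left(  u_{\mu}\right)  \right\Vert _{\ast}$ is dominated by the Lipschitz constant $C_{\ell}\left(  R_{F}\right)  $ of $\ell$ on $B_{R_{F}}$ from Assumption \ref{AssumpLocLip} (which holds since $u_{\mu}\in B_{R_{F}}$ and the derivative of a Lipschitz map is bounded by its Lipschitz constant). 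With that observation, and with the interpretation of $\eta_{\mu,S}\left(  D\right)  $ relative to the adjoint operator $\left(  \mathcal{L}_{\mu}^{\ast}\right)  ^{-1}$ that you already flag (and which the paper also adopts in a footnote), your argument is complete.
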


%

\proof
The generic constant $C$ in this proof depends only on the data $\alpha
,\beta,F,\ell,\mathbb{A}$ and may change its value from one occurrence to
another. A linearization of $\ell$ about $u_{\mu}$ yields%
\begin{equation}
\ell\left(  w\right)  =\ell\left(  u_{\mu}\right)  +D\left(  w-u_{\mu}\right)
+r\left(  w,u_{\mu}\right)  \quad\forall w\in B_{R} \label{llin}%
\end{equation}
with remainder%
\[
\left\vert r\left(  w,u_{\mu}\right)  \right\vert \leq C_{\ell}\left(
R\right)  \left\Vert w-u_{\mu}\right\Vert ^{2}.
\]
Hence,%
\begin{equation}
\mu-\mu_{S}=\ell\left(  u_{\mu}\right)  -\ell\left(  u_{S,\mu_{S}}\right)
=D\left(  u_{\mu}-u_{S,\mu_{S}}\right)  -r\left(  u_{S,\mu_{S}},u_{\mu
}\right)  \label{quantityD}%
\end{equation}
and $r\left(  \cdot,\cdot\right)  $ satisfies the estimate%
\[
\left\vert r\left(  u_{S,\mu_{S}},u_{\mu}\right)  \right\vert \leq C\left\Vert
u_{S,\mu_{S}}-u_{\mu}\right\Vert ^{2}\overset{\text{(\ref{umueminusumueSmueS}%
)}}{\leq}C\left(  \eta_{\mu,S}\left(  F\right)  \right)  ^{2}.
\]
For the linear part (\ref{llin}) we apply the Aubin-Nitsche argument (see,
e.g., \cite[Thm. 3.2.4]{CiarletPb}, \cite[Sect. 5.8]{scottbrenner3},
\cite[Sect. 32.3.1]{ErnGuermondII}) as follows. We consider the dual problem:
find $v_{\mu}\in H_{0}^{1}\left(  \Omega\right)  $ such that%
\begin{equation}
a_{\mu}\left(  w,v_{\mu}\right)  =D\left(  w\right)  \quad\forall w\in
H_{0}^{1}\left(  \Omega\right)  . \label{amueD}%
\end{equation}
Since $\mathbb{A}^{T}$ also satisfies the bounds (\ref{mc1}), problem
(\ref{amueD}) is well posed and $v_{\mu}=\left(  \mathcal{L}_{\mu}^{\ast
}\right)  ^{-1}D$ with the solution operator $\left(  \mathcal{L}_{\mu}^{\ast
}\right)  ^{-1}$ for (\ref{amueD}). The choice $w=v_{\mu}$ implies via the
coercivity of the bilinear form $\left\Vert v_{\mu}\right\Vert \leq\left\Vert
D\right\Vert _{\ast}/\alpha$. Let $e_{S}:=u_{S,\mu_{S}}-u_{\mu}$ and denote by
$w_{S}\in S$ the best approximation of $v_{\mu}:$%
\[
\inf_{g_{S}\in S}\left\Vert v_{\mu}-g_{S}\right\Vert =\left\Vert v_{\mu}%
-w_{S}\right\Vert
\]
which satisfies $\left\Vert w_{S}\right\Vert \leq\left\Vert v_{\mu}\right\Vert
$. It holds%
\begin{align*}
a_{\mu}\left(  e_{S},w_{S}\right)   &  =a_{\mu}\left(  u_{S,\mu_{S}}%
,w_{S}\right)  -a_{\mu_{S}}\left(  u_{S,\mu_{S}},w_{S}\right) \\
&  =\int_{\Omega}\left\langle \left(  \mathbb{A}\left(  \cdot,\mu\right)
-\mathbb{A}\left(  \cdot,\mu_{S}\right)  \right)  \nabla u_{S,\mu_{S}},\nabla
w_{S}\right\rangle +\left(  \lambda\left(  \cdot,\mu\right)  -\lambda\left(
\cdot,\mu_{S}\right)  \right)  u_{S,\mu_{S}}w_{S}.
\end{align*}
The same arguments as for (\ref{adiffmue}) lead to%
\[
\left\vert a_{\mu}\left(  e_{S},w_{S}\right)  \right\vert \leq C\left\Vert
e_{S}\right\Vert \left\Vert v_{\mu}\right\Vert \left\vert \mu-\mu
_{S}\right\vert \leq C\frac{\left\Vert D\right\Vert _{\ast}}{\alpha}\left\Vert
e_{S}\right\Vert \eta_{\mu,S}\left(  F\right)  .
\]
We combine these estimates and obtain for any $w_{S}$ the bound for the
quantity $D$ in (\ref{quantityD})\footnote{The adjoint approximation property
$\eta_{\mu,S}\left(  D\right)  $ has to be taken with respect to the operator
$\left(  \mathcal{L}_{\mu}^{\ast}\right)  ^{-1}$.}%
\begin{align*}
\left\vert D\left(  e_{S}\right)  \right\vert  &  =\left\vert a_{\mu}\left(
e_{S},v_{\mu}\right)  \right\vert \leq\left\vert a_{\mu}\left(  e_{S},v_{\mu
}-w_{S}\right)  \right\vert +\left\vert a_{\mu}\left(  e_{S},w_{S}\right)
\right\vert \\
&  \leq\beta\left\Vert e_{S}\right\Vert _{H^{1}\left(  \Omega\right)
}\left\Vert v_{\mu}-w_{S}\right\Vert _{H^{1}\left(  \Omega\right)
}+C\left\Vert e_{S}\right\Vert \eta_{\mu,S}\left(  F\right) \\
&  \leq C\left\Vert e_{S}\right\Vert \left(  \eta_{\mu,S}\left(  D\right)
+\eta_{\mu,S}\left(  F\right)  \right)  .
\end{align*}
Clearly, $\left\Vert D\right\Vert _{\ast}\leq D_{\ell}\left(  R_{F}\right)  $
and the assertion follows by combining these estimates with (\ref{Theouest}).%
\endproof

The following corollary provides an error estimate for the simplified scenario
(\ref{mc5}), (\ref{mc6}). We need the approximation property for the Poisson
problem (and $F\neq0$). Let $\mathcal{L}_{\Delta}^{-1}:H^{-1}\left(
\Omega\right)  \rightarrow H_{0}^{1}\left(  \Omega\right)  $ denote the
solution operator for the Poisson problem (\ref{linPoisson}) where $S=
H_{0}^{1}\left(  \Omega\right)  $. Then the approximation property for the
Poisson problem is defined by:%
\[
\eta_{\Delta,S}\left(  F\right)  :=\inf_{v\in S}\frac{\left\Vert
\mathcal{L}_{\Delta}^{-1}F-v\right\Vert }{\left\Vert F\right\Vert _{\ast}}.
\]

\begin{corollary}
\label{Corsimplvers}Let Assumption \ref{AssumpLocLip} be satisfied and assume
that (\ref{mc5}), (\ref{mc6}) hold. Let $F\in H^{-1}\left(  \Omega\right)  $
and set $C_{\ell}^{F}:=C_{\ell}\left(  \left\Vert F\right\Vert _{\ast}\right)
$. The right-hand side $F$ is assumed to be small enough such that%
\begin{equation}
\left\Vert F\right\Vert _{\ast}p\frac{\beta^{p-1}}{\alpha^{2p}}C_{\ell}%
^{F}C_{\mathbb{A}}\left(  \frac{C_{\ell}^{F}\left\Vert F\right\Vert _{\ast}%
}{\alpha^{p}}\right)  \leq\frac{1}{2}. \label{Fsmall}%
\end{equation}
Let $\mu$ be a fixed point of (\ref{mc4}) and $\mu_{S}$ one of (\ref{defmueS}%
). Then,
\[
\left\vert \mu-\mu_{S}\right\vert \leq2\frac{C_{\ell}^{F}}{\alpha^{p}}%
\eta_{\Delta,S}\left(  F\right)  \left\Vert F\right\Vert _{\ast}.
\]
The corresponding solutions $u_{\mu}$ of (\ref{5.1}) and $u_{S,\mu_{S}}$ of
(\ref{5.2}) solve the nonlinear problems (\ref{mc3}) and (\ref{mc19nonlin}).
The energy error can be estimated by%
\[
\left\Vert u_{\mu}-u_{S,\mu_{S}}\right\Vert \leq\eta_{\Delta,S}\left(
F\right)  \frac{\left\Vert F\right\Vert _{\ast}}{\alpha}\left(  1+\frac{1}%
{p}\left(  \frac{\alpha}{\beta}\right)  ^{p-1}\right)  .
\]

\end{corollary}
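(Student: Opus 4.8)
The plan is to reduce everything to the scalar fixed point equation from Corollary \ref{CorSimplCoeff} and to exploit the explicit representation of the solutions through the Poisson solution.

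First I would record the structural identities. Because of (\ref{mc5}) the continuous solution of (\ref{5.1}) at parameter $\mu$ is $u_{\mu}=\psi/a\left(\mu\right)$ with $\psi:=\mathcal{L}_{\Delta}^{-1}F$, and the discrete solution of (\ref{5.2}) at $\mu_{S}$ is $u_{S,\mu_{S}}=\psi_{S}/a\left(\mu_{S}\right)$, where $\psi_{S}$ solves (\ref{discretePoisson}); the associated scalar equations are $\mu=\ell\left(\psi\right)/a\left(\mu\right)^{p}$ and $\mu_{S}=\ell\left(\psi_{S}\right)/a\left(\mu_{S}\right)^{p}$ as in (\ref{FPsimplified}), (\ref{mc20}). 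Testing the Poisson problems with their own solutions gives $\left\Vert \psi\right\Vert ,\left\Vert \psi_{S}\right\Vert \leq\left\Vert F\right\Vert _{\ast}$, and since that bilinear form is symmetric with coercivity and continuity constant one, C\'{e}a's estimate is sharp, so $\left\Vert \psi-\psi_{S}\right\Vert =\eta_{\Delta,S}\left(F\right)\left\Vert F\right\Vert _{\ast}$ (as $\psi_{S}$ is the $\left\Vert \cdot\right\Vert $-orthogonal projection of $\psi$). By (\ref{mc6}) the homogeneity with $p>0$ forces $\ell\left(0\right)=0$, whence $\left\vert \ell\left(\psi\right)\right\vert ,\left\vert \ell\left(\psi_{S}\right)\right\vert \leq C_{\ell}^{F}\left\Vert F\right\Vert _{\ast}$ and therefore $\left\vert \mu\right\vert ,\left\vert \mu_{S}\right\vert \leq C_{\ell}^{F}\left\Vert F\right\Vert _{\ast}/\alpha^{p}$, which is exactly the argument appearing inside $C_{\mathbb{A}}\left(\cdot\right)$ in (\ref{Fsmall}).

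Next I would estimate $\left\vert \mu-\mu_{S}\right\vert $ through the telescoping split
\[
\mu-\mu_{S}=\frac{\ell\left(\psi\right)-\ell\left(\psi_{S}\right)}{a\left(\mu\right)^{p}}+\ell\left(\psi_{S}\right)\left(\frac{1}{a\left(\mu\right)^{p}}-\frac{1}{a\left(\mu_{S}\right)^{p}}\right).
\]
The first term is a consistency contribution: the local Lipschitz bound for $\ell$ from Assumption \ref{AssumpLocLip} on $B_{\left\Vert F\right\Vert _{\ast}}$ together with $a\left(\mu\right)\geq\alpha$ gives the bound $C_{\ell}^{F}\eta_{\Delta,S}\left(F\right)\left\Vert F\right\Vert _{\ast}/\alpha^{p}$. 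For the second term the key trick is to use the discrete fixed point relation $\ell\left(\psi_{S}\right)=a\left(\mu_{S}\right)^{p}\mu_{S}$ to rewrite it as $\mu_{S}\left(a\left(\mu_{S}\right)^{p}-a\left(\mu\right)^{p}\right)/a\left(\mu\right)^{p}$; a mean value estimate for $x\mapsto x^{p}$ on $\left[\alpha,\beta\right]$ and the Lipschitz bound $\left\vert a\left(\mu\right)-a\left(\mu_{S}\right)\right\vert \leq C_{\mathbb{A}}\left(R\right)\left\vert \mu-\mu_{S}\right\vert $ (with $R$ the argument above) then dominate it by $p\beta^{p-1}\alpha^{-2p}C_{\ell}^{F}\left\Vert F\right\Vert _{\ast}C_{\mathbb{A}}\left(R\right)\left\vert \mu-\mu_{S}\right\vert $ after inserting $\left\vert \mu_{S}\right\vert \leq R$. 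By the smallness hypothesis (\ref{Fsmall}) this prefactor is $\leq1/2$, so it is absorbed into the left-hand side, delivering $\left\vert \mu-\mu_{S}\right\vert \leq2C_{\ell}^{F}\alpha^{-p}\eta_{\Delta,S}\left(F\right)\left\Vert F\right\Vert _{\ast}$.

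Finally, for the energy error I would split $u_{\mu}-u_{S,\mu_{S}}=\left(\psi-\psi_{S}\right)/a\left(\mu\right)+\psi_{S}\left(1/a\left(\mu\right)-1/a\left(\mu_{S}\right)\right)$. The first piece is at most $\eta_{\Delta,S}\left(F\right)\left\Vert F\right\Vert _{\ast}/\alpha$. For the second, using $\left\Vert \psi_{S}\right\Vert \leq\left\Vert F\right\Vert _{\ast}$, $a\geq\alpha$, and the Lipschitz bound on $a$ produces a term proportional to $C_{\mathbb{A}}\left(R\right)\left\vert \mu-\mu_{S}\right\vert /\alpha^{2}$; substituting the fixed point estimate just obtained and then using (\ref{Fsmall}) once more to eliminate the product $C_{\ell}^{F}C_{\mathbb{A}}\left(R\right)\left\Vert F\right\Vert _{\ast}$ collapses the constant exactly to $\frac{1}{p}\left(\alpha/\beta\right)^{p-1}\eta_{\Delta,S}\left(F\right)\left\Vert F\right\Vert _{\ast}/\alpha$, which combined with the first piece yields the stated bound. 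The main obstacle is purely bookkeeping: organizing the reformulation $\ell\left(\psi_{S}\right)=a\left(\mu_{S}\right)^{p}\mu_{S}$ and the two mean value inequalities so that the powers $\beta^{p-1}/\alpha^{2p}$ line up precisely with the smallness condition (here one tacitly uses $p\geq1$ to bound the intermediate point by $\beta$); the functional-analytic content is entirely carried by Corollary \ref{CorSimplCoeff} and the sharp C\'{e}a estimate for the Poisson problem.
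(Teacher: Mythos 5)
Your proposal is correct and follows essentially the same route as the paper's proof: the same telescoping split of $\mu-\mu_{S}$ into a consistency term and a term proportional to $\left\vert a\left(\mu\right)^{p}-a\left(\mu_{S}\right)^{p}\right\vert$, the same a priori bounds $\left\vert \mu\right\vert ,\left\vert \mu_{S}\right\vert \leq C_{\ell}^{F}\left\Vert F\right\Vert _{\ast}\alpha^{-p}$ feeding the argument of $C_{\mathbb{A}}$, absorption via (\ref{Fsmall}), and the identical split for the energy error. Your two small variations --- using the exact orthogonal-projection identity $\left\Vert \psi-\psi_{S}\right\Vert =\eta_{\Delta,S}\left(F\right)\left\Vert F\right\Vert _{\ast}$ where the paper only needs the inequality, and rewriting $\ell\left(\psi_{S}\right)=a\left(\mu_{S}\right)^{p}\mu_{S}$ instead of bounding $\left\vert \ell\left(\psi_{S}\right)\right\vert$ and $a\left(\mu_{S}\right)^{-p}$ separately --- lead to the same constants and do not change the argument.
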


%

\proof
Let $\mu$ be a fixed point of (\ref{mc4}) and $\mu_{S}$ one of (\ref{defmueS}%
). Then%
\[
\left\vert \mu-\mu_{S}\right\vert =\left\vert \frac{\ell\left(  \mathcal{\psi
}\right)  }{a\left(  \mu\right)  ^{p}}-\frac{\ell\left(  \mathcal{\psi}%
_{S}\right)  }{a\left(  \mu_{S}\right)  ^{p}}\right\vert
\]
with $\psi=\mathcal{L}_{\Delta}^{-1}F$ and $\psi_{S}$ being its Galerkin
approximation in $S$. Then,
\[
\left\vert \frac{\ell\left(  \mathcal{\psi}\right)  }{a\left(  \mu\right)
^{p}}-\frac{\ell\left(  \mathcal{\psi}_{S}\right)  }{a\left(  \mu_{S}\right)
^{p}}\right\vert \leq\left\vert \frac{\ell\left(  \mathcal{\psi}\right)
-\ell\left(  \mathcal{\psi}_{S}\right)  }{a\left(  \mu\right)  ^{p}%
}\right\vert +\left\vert \ell\left(  \mathcal{\psi}_{S}\right)  \left(
\frac{1}{a\left(  \mu\right)  ^{p}}-\frac{1}{a\left(  \mu_{S}\right)  ^{p}%
}\right)  \right\vert .
\]
To estimate the difference of the first term we use the boundedness%
\[
\left\Vert \psi\right\Vert \leq\left\Vert F\right\Vert _{\ast}\quad
\text{and\quad}\left\Vert \psi_{S}\right\Vert \leq\left\Vert F\right\Vert
_{\ast}%
\]
and Assumption \ref{AssumpLocLip}%
\[
\left\vert \ell\left(  \mathcal{\psi}\right)  -\ell\left(  \mathcal{\psi}%
_{S}\right)  \right\vert \leq C_{\ell}^{F}\left\Vert \psi-\mathcal{\psi}%
_{S}\right\Vert \leq C_{\ell}^{F}\eta_{\Delta,S}\left(  F\right)  \left\Vert
F\right\Vert _{\ast}\text{.}%
\]
In this way we get%
\[
\left\vert \frac{\ell\left(  \mathcal{\psi}\right)  -\ell\left(
\mathcal{\psi}_{S}\right)  }{a\left(  \mu\right)  ^{p}}\right\vert \leq
\frac{C_{\ell}^{F}}{\alpha^{p}}\eta_{\Delta,S}\left(  F\right)  \left\Vert
F\right\Vert _{\ast}.
\]
The homogeneity of $\ell$ as in (\ref{mc6}) implies $\ell\left(  0\right)
=0$. Hence, for the second term we first employ the local Lipschitz continuity
of $\ell\left(  \cdot\right)  $ for%
\begin{equation}
\left\vert \ell\left(  \psi_{S}\right)  \right\vert =\left\vert \ell\left(
0\right)  \right\vert +\left\vert \ell\left(  \psi_{S}\right)  -\ell\left(
0\right)  \right\vert \leq C_{\ell}^{F}\left\Vert F\right\Vert _{\ast}
\label{estlpsiS}%
\end{equation}
and obtain%
\begin{align*}
\left\vert \ell\left(  \mathcal{\psi}_{S}\right)  \left(  \frac{1}{a\left(
\mu\right)  ^{p}}-\frac{1}{a\left(  \mu_{S}\right)  ^{p}}\right)  \right\vert
&  \leq\left\vert \ell\left(  \psi_{S}\right)  \right\vert \frac{\left\vert
a\left(  \mu_{S}\right)  ^{p}-a\left(  \mu\right)  ^{p}\right\vert }{a\left(
\mu\right)  ^{p}a\left(  \mu_{S}\right)  ^{p}}\\
&  \leq C_{\ell}^{F}\left\Vert F\right\Vert _{\ast}p\frac{\beta^{p-1}}%
{\alpha^{2p}}\left\vert a\left(  \mu_{S}\right)  -a\left(  \mu\right)
\right\vert .
\end{align*}
To estimate the last factor we use the local Lipschitz continuity of
$\mathbb{A}$ (cf. Assumption \ref{AssumpLocLip} ) which simplifies for
(\ref{mc5}), (\ref{mc6}) to%
\[
\left\vert \left(  a\left(  \mu\right)  -a\left(  \mu_{S}\right)  \right)
\right\vert \leq C_{\mathbb{A}}\left(  \rho\right)  \left\vert \mu-\mu
_{S}\right\vert \quad\text{with }\rho:=\max\left\{  \left\vert \mu\right\vert
,\left\vert \mu_{S}\right\vert \right\}  .
\]
The equation (\ref{FPsimplified}) for $\mu$ implies%
\[
\left\vert \mu\right\vert =\frac{\left\vert \ell\left(  \psi\right)
\right\vert }{\left\vert a\left(  \mu\right)  \right\vert ^{p}}\leq
\frac{\left\vert \ell\left(  \psi\right)  \right\vert }{\alpha^{p}}.
\]
The numerator can be estimated as in (\ref{estlpsiS}) by $\left\vert
\ell\left(  \psi\right)  \right\vert \leq C_{\ell}^{F}\left\Vert F\right\Vert
_{\ast}$ so that $\left\vert \mu\right\vert \leq C_{\ell}^{F}\left\Vert
F\right\Vert _{\ast}\alpha^{-p}$. The same arguments applied to the discrete
fixed point yield $\left\vert \mu_{S}\right\vert \leq C_{\ell}^{F}\left\Vert
F\right\Vert _{\ast}\alpha^{-p}$. Hence,%
\[
\left\vert a\left(  \mu\right)  -a\left(  \mu_{S}\right)  \right\vert \leq
C_{\mathbb{A}}\left(  \frac{C_{\ell}^{F}\left\Vert F\right\Vert _{\ast}%
}{\alpha^{p}}\right)  \left\vert \mu-\mu_{S}\right\vert .
\]
The combination of these inequalities leads to%
\[
\left\vert \mu-\mu_{S}\right\vert \leq\frac{C_{\ell}^{F}}{\alpha^{p}}%
\eta_{\Delta,S}\left(  F\right)  \left\Vert F\right\Vert _{\ast}+C_{\ell}%
^{F}\left\Vert F\right\Vert _{\ast}p\frac{\beta^{p-1}}{\alpha^{2p}%
}C_{\mathbb{A}}\left(  \frac{C_{\ell}^{F}\left\Vert F\right\Vert _{\ast}%
}{\alpha^{p}}\right)  \left\vert \mu-\mu_{S}\right\vert .
\]
From the smallness assumption on $F$ (cf. (\ref{Fsmall})) the estimate for
$\mu-\mu_{S}$ follows.

For the difference $u_{\mu}-u_{S,\mu_{S}}$ we employ the previous estimates
(with $\psi:=\mathcal{L}_{\Delta}^{-1}F$ and $\psi_{S}$ as in
(\ref{discretePoisson})) for%
\begin{align*}
\left\Vert u_{\mu}-u_{S,\mu_{S}}\right\Vert  &  \leq\left\Vert \frac{\psi
}{a\left(  \mu\right)  }-\frac{\psi_{S}}{a\left(  \mu_{S}\right)  }\right\Vert
\leq\left\Vert \frac{\psi-\psi_{S}}{a\left(  \mu\right)  }\right\Vert
+\left\Vert \psi_{S}\left(  \frac{1}{a\left(  \mu\right)  }-\frac{1}{a\left(
\mu_{S}\right)  }\right)  \right\Vert \\
&  \leq\eta_{\Delta,S}\left(  F\right)  \frac{\left\Vert F\right\Vert _{\ast}%
}{\alpha}+\left\Vert F\right\Vert _{\ast}\frac{1}{\alpha^{2}}C_{\mathbb{A}%
}\left(  \frac{C_{\ell}^{F}\left\Vert F\right\Vert _{\ast}}{\alpha^{p}%
}\right)  \left\vert \mu-\mu_{S}\right\vert \\
&  \leq\eta_{\Delta,S}\left(  F\right)  \frac{\left\Vert F\right\Vert _{\ast}%
}{\alpha}\left(  1+\frac{1}{p}\left(  \frac{\alpha}{\beta}\right)
^{p-1}\right)  .
\end{align*}%
\endproof

\begin{remark}
A superconvergence result for the fixed point error for the simplified
scenario (\ref{mc5}), (\ref{mc6}) can be derived along the same lines of
arguments as in the proof of Corollary \ref{Corsuper}; the details are omitted.
\end{remark}

\section{Numerical experiments}

In this section the results of some numerical experiments are presented which
illustrate some characteristic behaviour of the fully discrete numerical
method. We restrict to the one-dimensional domain $\Omega:=\left(  0,1\right)
$, the right-hand side $f=1$, and%
\begin{equation}
F\left(  v\right)  =\int_{\Omega}v,\quad\lambda=0\text{,\quad}\mathbb{A}%
\left(  \mathbf{x},r\right)  :=a\left(  x,r\right)  \operatorname*{Id}
\label{choiceF}%
\end{equation}
for some coefficient $a\left(  \cdot,\cdot\right)  $ at our disposition. We
employ a Galerkin discretization with continuous, piecewise linear spline
functions on a uniform partitioning of $\Omega$ with mesh points $x_{i}=ih$,
$0\leq i\leq n+1$, and mesh width $h=1/\left(  n+1\right)  $. The intervals
are denoted by $\tau_{i}:=\left(  x_{i-1},x_{i}\right)  $, $1\leq i\leq N+1$,
and the mesh by $\mathcal{T}_{h}:=\left\{  \tau_{i}:1\leq i\leq N+1\right\}
$. The space of spline functions is given by%
\begin{equation}
S_{h}:=\left\{  u\in H_{0}^{1}\left(  \Omega\right)  \mid\forall\tau
\in\mathcal{T}_{h}\quad\left.  u\right\vert _{\tau}\in\mathbb{P}_{1}\left(
\tau\right)  \right\}  , \label{defSh}%
\end{equation}
where $\mathbb{P}_{1}\left(  \tau\right)  $ is the space of polynomials of
maximal degree $\leq1$ on $\tau$. We write $\mu_{h}$, $u_{h}$ short for
$\mu_{S_{h}}$ and $u_{S_{h}}$ and similarly for other quantities.

\subsection{Convergence of the fixed point iteration for (\ref{mc20}%
)\label{SubSecConvFPIT}}

We consider the simplified scenario as in Corollary \ref{CorSimplCoeff},
(\ref{choiceF}), and (\ref{defSh}). Let%
\[
\mu_{1}=\frac{1}{4},\quad\mu_{0}=1,\quad\mu_{2}=2,\quad\nu_{1}=\frac{1}%
{4}+\frac{1}{5},\quad\nu_{2}=2-\frac{1}{5}.
\]
For the mapping $\ell\left(  \cdot\right)  $ we choose%
\begin{equation}
\ell\left(  u\right)  :=12\int_{\Omega}\left\vert u^{\prime}\right\vert ^{2}.
\label{deflj}%
\end{equation}

\subsubsection{The convergent case (\ref{mc17})\label{SecGoodCase}}

In this section, our parameter choice corresponds to the condition
(\ref{mc17}) so that the fixed point iteration converges for any starting
guess in $\left]  \nu_{1},\nu_{2}\right[  $. Let $G$ be given by%
\begin{subequations}
\label{numexp_defG}
\end{subequations}%
\begin{equation}
G\left(  \mu\right)  :=\left\{
\begin{array}
[c]{cc}%
G_{\operatorname*{L}}\left(  \mu\right)  & \text{for }\mu\in\left[  \mu
_{1},\mu_{0}\right]  ,\\
G_{\operatorname*{R}}\left(  \mu\right)  & \text{for }\mu\in\left[  \mu
_{0},\mu_{2}\right]  ,
\end{array}
\right.  \tag{%
\ref{numexp_defG}%
a}\label{numexp_defGa}%
\end{equation}
where $G_{\operatorname*{L}}$, $G_{\operatorname*{R}}$ are the quadratic
polynomials which interpolates the values $\left(  \mu_{1},\mu_{1}\right)  $,
$\left(  \nu_{1},\nu_{1}+\frac{1}{10}\right)  $, $\left(  \mu_{0},\mu
_{0}\right)  $ and $\left(  \mu_{0},\mu_{0}\right)  ,\left(  \nu_{2},\nu
_{2}-\frac{1}{10}\right)  $, $\left(  \mu_{2},\mu_{2}\right)  $. They are
explicitly given by (this is the case depicted in Fig. \ref{Fig1})%
\begin{equation}
G_{\operatorname*{L}}\left(  x\right)  =\frac{1}{22}\left(  -20x^{2}%
+47x-5\right)  \quad\text{and\quad}G_{\operatorname*{R}}\left(  x\right)
=\frac{1}{8}\left(  5x^{2}-7x+10\right)  . \tag{%
\ref{numexp_defG}%
b}\label{numexp_defGb}%
\end{equation}
This function $G$ arises from (\ref{FPsimplified}) by choosing%
\begin{equation}
a\left(  \mu\right)  :=\left(  \frac{\ell\left(  \psi\right)  }{G\left(
\mu\right)  }\right)  ^{1/p}\quad\text{and\quad}\lambda=0.
\label{num_exp_defa}%
\end{equation}

The choice of $F$ and $\ell$ as in (\ref{choiceF}) and (\ref{deflj}) leads to
\[
\psi=\frac{x\left(  1-x\right)  }{2}\quad\text{and}\quad\ell\left(
\psi\right)  =1.
\]
The corresponding value of the coefficient $a\left(  \cdot\right)  $ is%
\begin{equation}
a\left(  \mu\right)  =G\left(  \mu\right)  ^{-1/2}, \label{defa12}%
\end{equation}
where $G$ is as in (\ref{numexp_defG}). The solution for the fixed point
$\mu_{0}=1$ of $\mu=G\left(  \mu\right)  $ is given by%
\[
u\left(  x\right)  =\frac{x\left(  1-x\right)  }{2}.
\]
The convergence plot and semi-log plot of the error $\left\vert \mu_{h}%
-\mu_{h,m}\right\vert $ in Figure \ref{Figgoodcase} illustrate the exponential
convergence of the fixed points iteration $\mu_{h,m+1}=G_{h}\left(  \mu
_{h,m}\right)  $ for the starting guess $\mu_{h,0}:=\nu_{1}$.
\begin{figure}[tbp] \centering
\begin{tabular}
[c]{lll}%
{\includegraphics[
height=2.1707in,
width=2.6212in
]%
{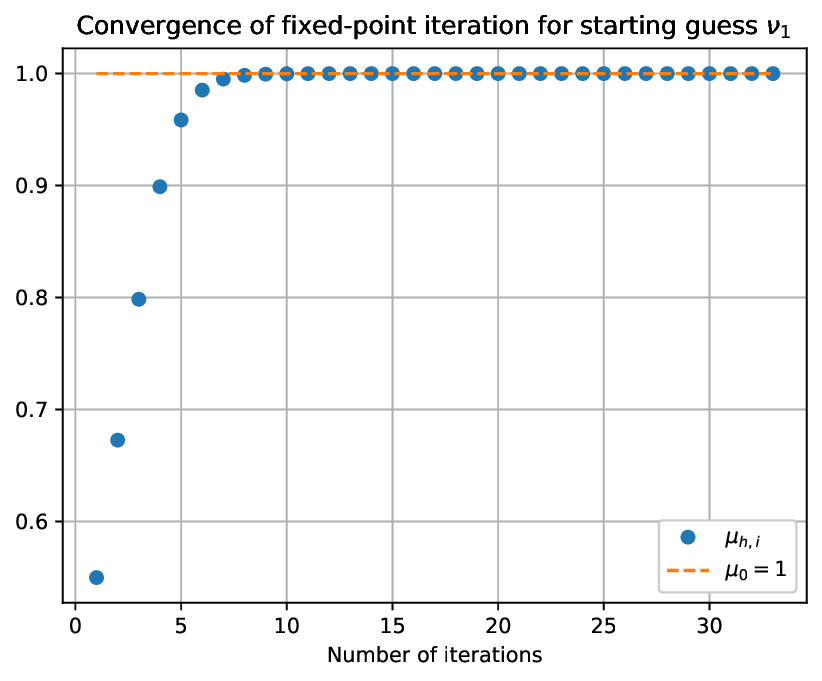}%
}
&  &
{\includegraphics[
height=2.1517in,
width=2.6783in
]%
{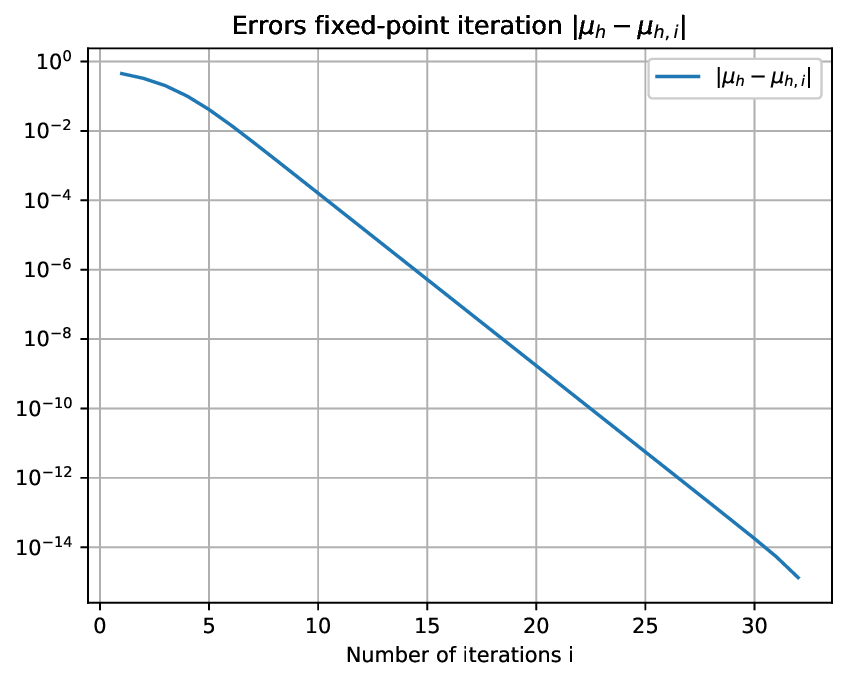}%
}
\end{tabular}
\caption{Convergence of the fixed point iteration
for a mesh with $h=2^{-13}$. Left: The iterations $\mu_{h,n}$ are converging
rapidly to the discrete fixed point $\mu_{h}$ which is very close to $\mu
_{0}=1$ for this fine mesh. Right: The semilog plot of the error illustrates
the exponential convergence of the fixed point iteration.\label{Figgoodcase}}%
\end{figure}%

As a reference value we have chosen $\mu_{h}\leftarrow\mu_{h,M}$ for some
$M\gg m$.\ Due to the fine mesh $h=2^{-13}$ the reference value $\mu_{h,M}$ is
very close to the exact value $\mu_{0}=1$. Convergence tests with respect to
$h$ will be reported in section \ref{SecNumExp_FDM}.

\subsubsection{The divergent, bounded case (\ref{mc18})}

Let $F$, $\ell$ be as in the previous section. We consider the parameter case
as depicted in Figure \ref{Fig2}. Let%
\begin{equation}
\mu_{1}=\frac{1}{4},\quad\mu_{0}=1,\quad\mu_{2}=2,\quad\nu_{1}=\frac{3}%
{4},\quad\nu_{2}=1+\frac{3}{20}. \label{stas_div_bound}%
\end{equation}
For this case, $G$ is defined by%
\begin{subequations}
\label{numexpbdG}
\end{subequations}%
\begin{equation}
G\left(  \mu\right)  :=\left\{
\begin{array}
[c]{cc}%
G_{\operatorname*{L}}\left(  \mu\right)  & \text{for }\mu\in\left[  \mu
_{1},\mu_{0}\right]  ,\\
G_{\operatorname*{R}}\left(  \mu\right)  & \text{for }\mu\in\left[  \mu
_{0},\mu_{2}\right]  ,
\end{array}
\right.  \tag{%
\ref{numexpbdG}%
a}\label{numexp_bdGa}%
\end{equation}
where $G_{\operatorname*{L}}$, $G_{\operatorname*{R}}$ are the quadratic
polynomials which interpolate the values $\left(  \mu_{1},\mu_{1}\right)  $,
$\left(  \nu_{1},\nu_{1}+\frac{2}{5}\right)  $, $\left(  \mu_{0},\mu
_{0}\right)  $ and $\left(  \mu_{0},\mu_{0}\right)  ,\left(  \nu_{2},\nu
_{2}-\frac{2}{5}\right)  $, $\left(  \mu_{2},\mu_{2}\right)  $. They are
explicitly given by (this is the case depicted in Fig. \ref{Fig2})%
\begin{equation}
G_{\operatorname*{L}}\left(  x\right)  =-\frac{1}{5}\left(  16x^{2}%
-25x+4\right)  \quad\text{and\quad}G_{\operatorname*{R}}\left(  x\right)
=\frac{1}{51}\left(  160x^{2}-429x+320\right)  . \tag{%
\ref{numexpbdG}%
b}\label{numexp_bdGb}%
\end{equation}
The function $\psi$ is as in the previous experiment, $\lambda=0$, and
$a\left(  \cdot\right)  $ as in (\ref{num_exp_defa}) for the function $G$
defined in (\ref{numexpbdG}). In this case, the fixed point iteration fails to
converge for any tested starting guess (even for the choice $\mu_{h,0}=1$ due
to roundoff errors); however the iteration stays bounded in all cases. These
observations are in full accordance with the reasoning for case (\ref{mc18})
depicted in Figure \ref{Fig2}. The convergence behaviour is illustrated in
Figure \ref{Fig_bounded_case} for the starting guess $\mu_{h,0}:=\nu_{1}$.
\begin{figure}[ptb]%
\centering
\includegraphics[
height=2.4552in,
width=3.0096in
]%
{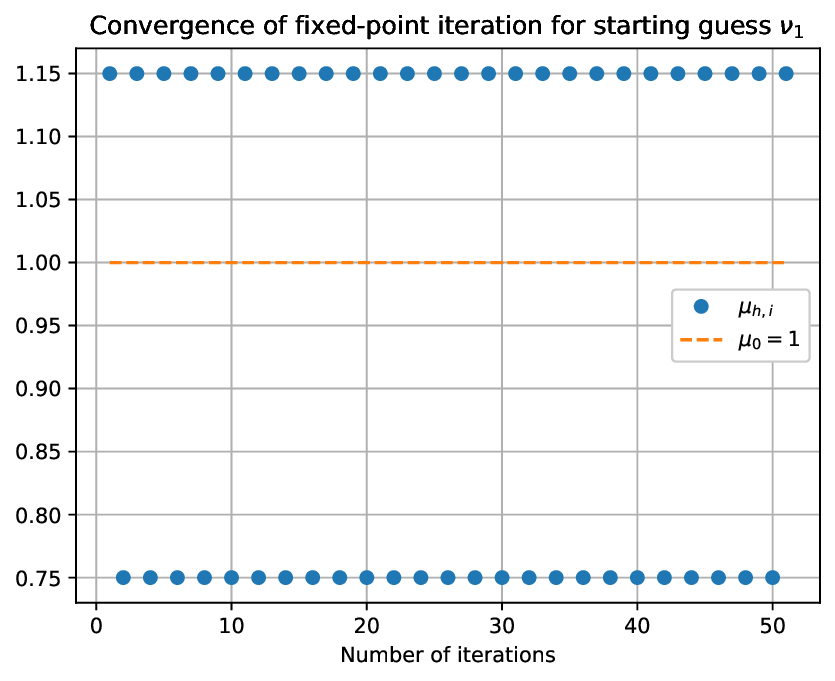}%
\caption{Divergent fixed point iteration for the parameter choice
(\ref{stas_div_bound}) and starting guess $\mu_{h,0}:=\nu_{1}$ such that the
iterates alternate between two values.}%
\label{Fig_bounded_case}%
\end{figure}

\subsubsection{The divergent, unbounded case (\ref{stas_instab})}

Let $F$, $\ell$ be as in the previous sections. We consider the parameter case
as depicted in Figure \ref{Fig3}. Let%
\begin{equation}
\mu_{1}=\frac{1}{4},\quad\mu_{0}=1,\quad\mu_{2}=2,\quad\nu_{1}=\frac{3}%
{4},\quad\nu_{2}=\frac{7}{5}. \label{stas_exploding}%
\end{equation}
For this case, $G$ is defined by%
\begin{subequations}
\label{numexp_expG}
\end{subequations}%
\begin{equation}
G\left(  \mu\right)  :=\left\{
\begin{array}
[c]{cc}%
G_{\operatorname{L}}\left(  \mu\right)  & \text{for }\mu\in\left[  \mu_{1}%
,\mu_{0}\right]  ,\\
G_{\operatorname{R}}\left(  \mu\right)  & \text{for }\mu\in\left[  \mu_{0}%
,\mu_{2}\right]  ,
\end{array}
\right.  \tag{%
\ref{numexp_expG}%
a}\label{numexp_expGa}%
\end{equation}
where
\begin{equation}
G_{\operatorname{L}}\left(  x\right)  =-3+16x-12x^{2}\quad\text{and\quad
}G_{\operatorname{R}}\left(  x\right)  =\frac{65}{6}-\frac{61}{4}x+\frac
{65}{12}x^{2} \tag{%
\ref{numexp_expG}%
b}\label{numexp_expGb}%
\end{equation}
are the quadratic polynomials which interpolate the values $\left(  \mu
_{1},\mu_{1}\right)  $, $\left(  \nu_{1},\nu_{1}+\frac{3}{2}\right)  $,
$\left(  \mu_{0},\mu_{0}\right)  $ and $\left(  \mu_{0},\mu_{0}\right)  $,
$\left(  \nu_{2},\nu_{2}-\frac{13}{10}\right)  $, $\left(  \mu_{2},\mu
_{2}\right)  $. This is the case depicted in Fig. \ref{Fig3}. The function
$\psi$ is as in the previous experiment, $\lambda=0$, and $a\left(
\cdot\right)  $ as in (\ref{num_exp_defa}) for the function $G$ defined in
(\ref{numexp_expG}). The fixed point iteration diverges toward infinity in all
cases. This is in accordance with the reasoning for case (\ref{stas_instab})
depicted in Figure \ref{Fig3}. The divergence is illustrated in Figure
\ref{Fig_exploding} for the starting guess $\mu_{h,0}:=\nu_{1}$.%
\begin{figure}[ptb]%
\centering
\includegraphics[
height=2.5512in,
width=3.1661in
]%
{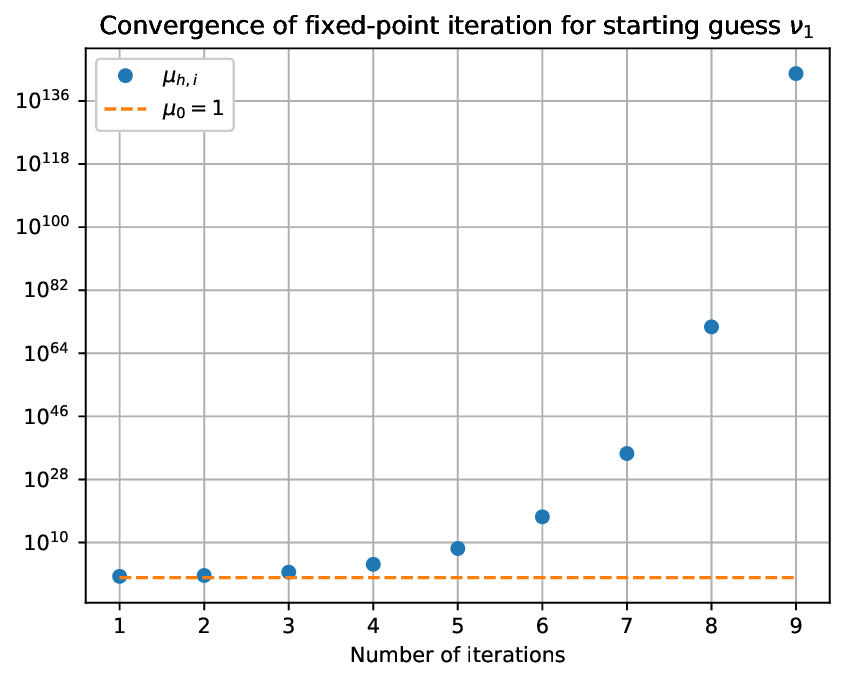}%
\caption{Divergent fixed point iteration for the parameter choice
(\ref{stas_exploding}) and starting guess $\mu_{h,0}:=\nu_{1}$ such that the
iterates diverges toward infinity.}%
\label{Fig_exploding}%
\end{figure}

\subsection{The general fixed point iteration}

Under the assumptions of Corollary \ref{CorSimplCoeff} the fixed point
iteration simplifies and the solution of a Poisson problem in each iteration
step is avoided: $G\left(  \mu\right)  =c/a\left(  \mu\right)  ^{p}$ for
$c=\ell\left(  \psi\right)  $. In this way, it becomes feasible to verify the
conditions on $G$ for the convergence of the fixed point iteration in a
neighbourhood of $\mu_{0}$ as described in (\ref{SecNumSol}). In this section
we consider a more general diffusion coefficient $\mathbb{A}\left(
x,\mu\right)  =a_{\varepsilon,m}\left(  x,\mu\right)  \operatorname*{Id}$ with
a parameter-dependent coefficient $a_{\varepsilon,m}\left(  \cdot
,\cdot\right)  :$%
\[
a_{\varepsilon,m}\left(  x,\mu\right)  :=a\left(  \mu\right)  \left(
1+\varepsilon\cos\left(  mx\right)  \right)  ,\qquad a\text{ as in
(\ref{defa12}),}%
\]
for $\varepsilon\in\left[  0,1\right[  $ and $m\in\mathbb{N}$. For
$\varepsilon=0$, the coefficient $a_{\varepsilon,m}\left(  \cdot,\cdot\right)
$ is the same as $a\left(  \cdot\right)  $ in first experiment of section
\ref{SubSecConvFPIT}. The function $G$ is given by $G\left(  \mu\right)
=\ell\left(  u_{\mu}\right)  $ and its discrete version by $G_{h}\left(
\mu\right)  =\ell\left(  u_{h,\mu}\right)  $. In Figure \ref{fig_perturb} this
function $G_{h}\left(  \mu\right)  $ is depicted for different values of
$\varepsilon$ and $m$.%
\begin{figure}[tbp] \centering
\begin{tabular}
[c]{lll}%
{\includegraphics[
height=2.2649in,
width=2.8374in
]%
{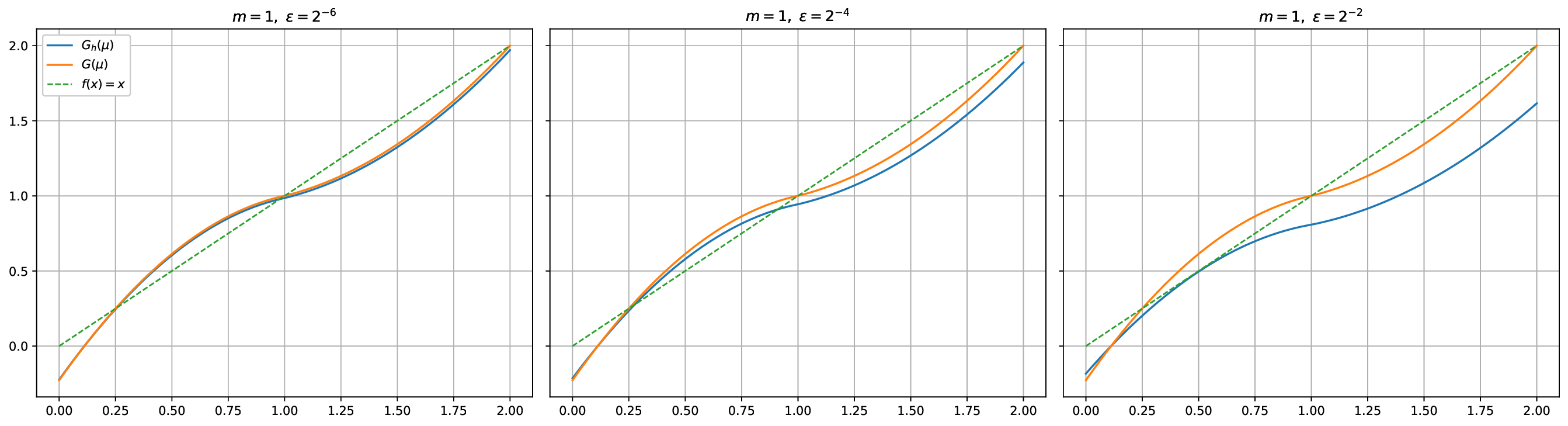}%
}
&  &
\raisebox{0.0069in}{\includegraphics[
height=2.2485in,
width=2.8167in
]%
{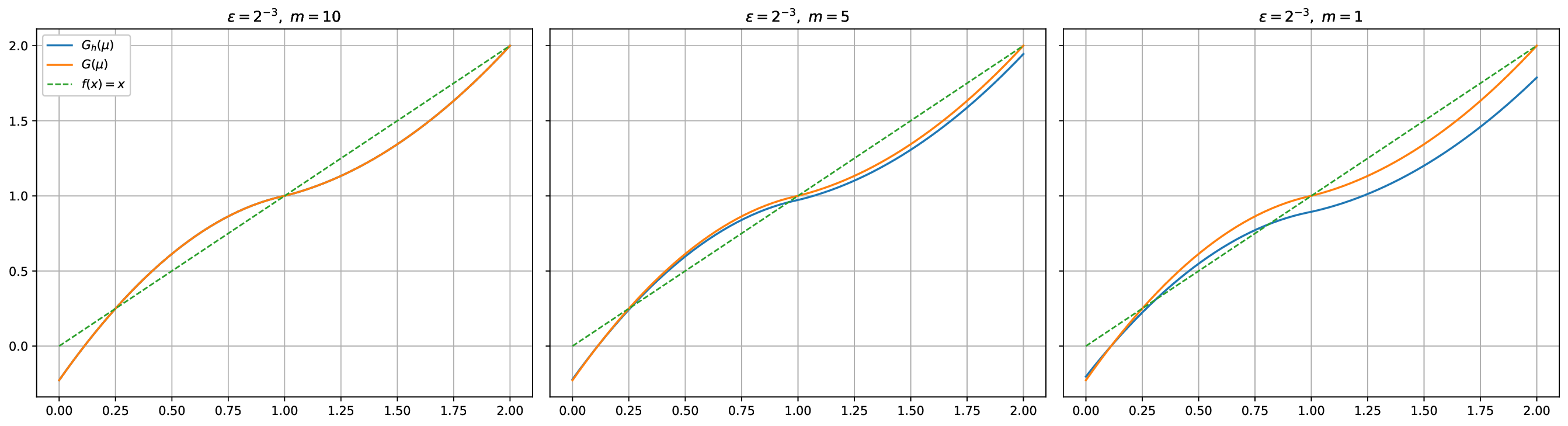}%
}
\end{tabular}
\caption{Plot of the function $G_{h}\left(  \mu\right)  $ for different choices of
$\varepsilon\in\left[  0,1\right[  $ and $m\in\mathbb{N}$. Left: $G_{h}\left(
\mu\right)  $ for $a_{\varepsilon,1}$ and $\varepsilon=10^{-2},10^{-4},10^{-6}$.
Right $G_{h}\left(  \mu\right)  $ for $a_{1/8,m}$ for $m=10, 5, 1$.\label{fig_perturb}}%
\end{figure}
We see that the functions $G_{h}$ for all choices of parameters $\left(
\varepsilon,m\right)  $ enjoy property (\ref{mc17}) which implies convergence
of the fixed point iteration. Larger values of $\varepsilon$ generate a small
shift of the original function ($\varepsilon=0$) while higher oscillations in
the perturbation ($m>1$) reduce this shift.

\subsection{Convergence of the fully discrete method\label{SecNumExp_FDM}}

Finally, we investigate the convergence order of the Galerkin/fixed point
approximation with respect to the mesh size in the finite element space
$S_{h}$ as defined in (\ref{defSh}). The choice of parameters are as in the
first experiment in section \ref{SubSecConvFPIT}. Since we are using piecewise
linear elements the approximation property (\ref{defapproxprop}) satisfies
$\eta_{\mu,h}\left(  F\right)  \leq Ch$. According to Theorem \ref{ThmGalConv}
we expect linear convergence and the solution of the original nonlinear
problem. The parameter set is chosen as for the convergent case described in
section \ref{SecGoodCase}. The fixed point iteration is stopped when the
machine tolerance has been reached.

In order to predict the convergence order of the fixed point with respect to
the mesh width $h$ we apply Corollary \ref{Corsuper}. Let $u_{\mu}$ denote the
exact solution of the Poisson problem with right-hand side $f=1$%
\[
\text{find }u_{\mu}\in H_{0}^{1}\left(  \Omega\right)  \ \text{s.t.\quad}%
\int_{\Omega}a\left(  \mu\right)  \nabla u_{\mu}\cdot\nabla v=\int_{\Omega
}v\quad\forall v\in H_{0}^{1}\left(  \Omega\right)  .
\]
It is well known that $u_{\mu}\in H_{0}^{1}\left(  \Omega\right)  \cap
H^{2}\left(  \Omega\right)  $ and $-u_{\mu}^{\prime\prime}=1/a\left(
\mu\right)  $. The linearization of the mapping $\ell$ around $u=u_{\mu}$ can
be written as%
\[
\ell\left(  u_{\mu}+v\right)  =\ell\left(  u_{\mu}\right)  +\left(  D_{\mu
},v\right)  _{\Omega}+\ell\left(  v\right)  \quad\text{for the constant
function }D_{\mu}=24/a\left(  \mu\right)  .
\]
In this way, the solution $v_{\mu}$ of the dual problem in (\ref{amueD}) is
given by%
\[
v_{\mu}=\mathcal{L}_{\mu}^{-1}\left(  D_{\mu}\right)
\]
and belongs to $H^{2}\left(  \Omega\right)  $. Well-known approximation
properties of $\mathbb{P}_{1}$ finite elements then imply%
\[
\eta_{\mu,h}\left(  f\right)  \leq Ch\quad\text{and\quad}\eta_{\mu,h}\left(
D_{\mu}\right)  \leq Ch
\]
and this implies quadratic convergence for the fixed point via Corollary
\ref{Corsuper}.

In Figure \ref{Fig_h_conv}, the errors $\left\Vert \left(  u_{\mu}%
-u_{h,\mu_{h}}\right)  ^{\prime}\right\Vert _{\max}$ and $\left\vert \mu
_{0}-\mu_{h}\right\vert $ are depicted as a function of the number of mesh
points, where $\left\Vert \cdot\right\Vert _{\max}$ denotes a discrete version
of the maximum norm. The expected linear convergence is clearly visible for
$\left\Vert \left(  u_{\mu}-u_{h,\mu_{h}}\right)  ^{\prime}\right\Vert _{\max
}$ as well as the quadratic superconvergence for the fixed point error which
show the sharpness of our theoretical estimates.%
\begin{figure}[tbp] \centering
\begin{tabular}
[c]{lll}%
{\includegraphics[
height=2.0609in,
width=2.5244in
]%
{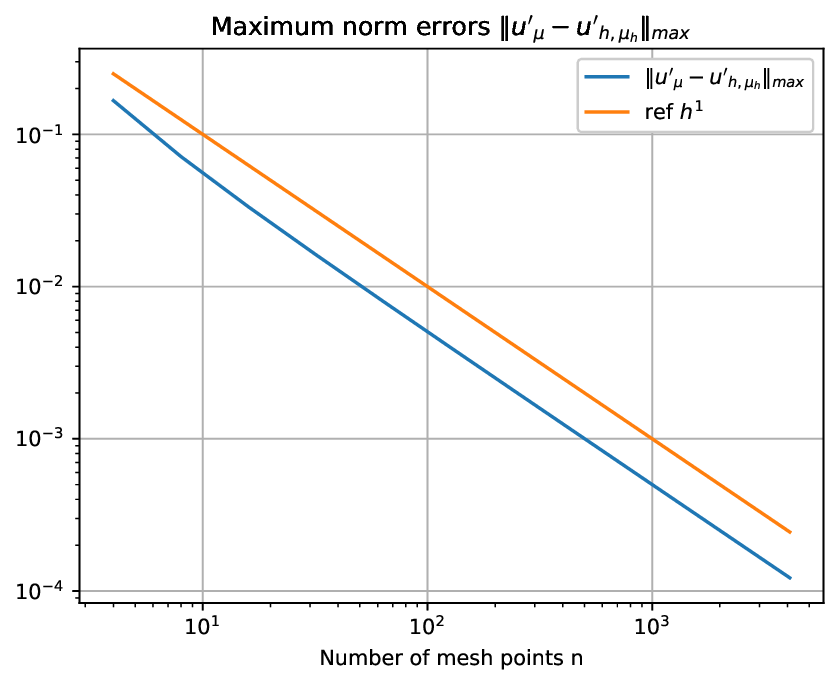}%
}
&  &
{\includegraphics[
height=1.9804in,
width=2.4025in
]%
{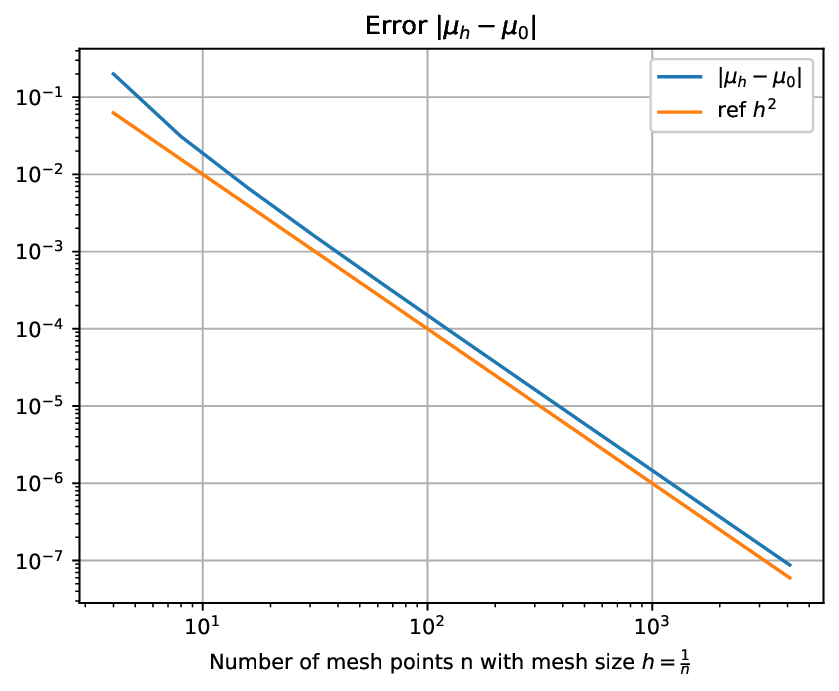}%
}
\end{tabular}
\caption{Convergence with respect to the number of mesh points. The fixed point
iteration is stopped when the machine tolerance has been reached. Left:
Convergence of the solution $u_{h,\mu_{h}}$ towards the exact solution
$u_{\mu}$. Right: Convergence of the discrete fixed point $\mu_{h}$ towards
the exact fixed point $\mu_{0}$.
\label{Fig_h_conv}}%
\end{figure}%

\textbf{Acknowledgment. }Thanks are due to Cl\'{e}ment Canc\`{e}s, Inria
Universit\'{e} de Lille, France, for calling our attention to the modified
fixed point iteration described in Appendix \ref{AppMFI}.

\appendix

\section{A modified fixed point iteration\label{AppMFI}}

In this appendix we present a modified fixed point iteration with improved
convergence properties compared to the simple iteration (\ref{mc16}) provided
some Lipschitz estimate of the function $G$ is available.

Recall that an operator $B:H\rightarrow H$ acting in a Hilbert space $H$ with
scalar product $\left(  \cdot,\cdot\right)  $ and norm $\left\vert
\cdot\right\vert $ is said to be \textit{monotone} iff%
\[
\left(  Bu-Bv,u-v\right)  \geq0\qquad\forall u,v\in H.
\]

\begin{lemma}
\label{LemMon}Let $I:H\rightarrow H$ denote the identity and $A:H\rightarrow
H$ be a mapping such that there exists a constant $\kappa\geq0$ with%
\begin{equation}
\left\vert Au-Av\right\vert \leq\kappa\left\vert u-v\right\vert \qquad\forall
u,v\in H. \label{App1}%
\end{equation}
Then $B_{\pm}=\kappa I\pm A$ is monotone.
\end{lemma}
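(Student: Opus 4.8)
The plan is to unwind the definition of monotonicity directly, so that the claim reduces to a single application of the Cauchy--Schwarz inequality combined with the Lipschitz bound \eqref{App1}. Fix arbitrary $u,v\in H$ and abbreviate $w:=u-v$. Since the scalar product is bilinear and $B_{\pm}=\kappa I\pm A$, I would first expand
\[
\left(  B_{\pm}u-B_{\pm}v,u-v\right)  =\kappa\left\vert w\right\vert
^{2}\pm\left(  Au-Av,w\right)  ,
\]
using that $\left(  \kappa(u-v),u-v\right)  =\kappa\left\vert w\right\vert
^{2}$. This isolates the troublesome cross term $\pm\left(  Au-Av,w\right)  $, whose sign is a priori indefinite.

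The key step is to control that cross term from below. By the Cauchy--Schwarz inequality and then the Lipschitz estimate \eqref{App1},
\[
\left\vert \left(  Au-Av,w\right)  \right\vert \leq\left\vert Au-Av\right\vert
\left\vert w\right\vert \leq\kappa\left\vert w\right\vert ^{2},
\]
so that $\pm\left(  Au-Av,w\right)  \geq-\kappa\left\vert w\right\vert ^{2}$. Substituting this into the expansion gives
\[
\left(  B_{\pm}u-B_{\pm}v,u-v\right)  \geq\kappa\left\vert w\right\vert
^{2}-\kappa\left\vert w\right\vert ^{2}=0,
\]
which is precisely the monotonicity of $B_{\pm}$.

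There is no serious obstacle here; the only point worth flagging is that the estimate must cover both signs simultaneously. This is exactly why I would pass through the absolute value $\left\vert \left(  Au-Av,w\right)  \right\vert$ rather than estimating $\left(  Au-Av,w\right)$ and $-\left(  Au-Av,w\right)$ separately: bounding the modulus of the cross term makes the argument insensitive to the choice of sign in $B_{\pm}$, so the single computation above settles both $B_{+}$ and $B_{-}$ at once. The hypothesis $\kappa\geq0$ is used implicitly to guarantee that $\kappa\left\vert w\right\vert ^{2}\geq0$ and that the Lipschitz constant is admissible in \eqref{App1}.
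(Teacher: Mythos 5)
Your proof is correct and follows essentially the same route as the paper: expand the scalar product, bound the cross term $\pm\left(Au-Av,u-v\right)$ from below via Cauchy--Schwarz, and then invoke the Lipschitz estimate \eqref{App1} to cancel against $\kappa\left\vert u-v\right\vert^{2}$. The only cosmetic difference is that the paper factors out $\left\vert u-v\right\vert$ before applying \eqref{App1}, whereas you apply it immediately; the content is identical.
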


%

\proof
The proof follows from the estimate%
\begin{align*}
\left(  \left(  \kappa u\pm Au\right)  -\left(  \kappa v\pm Av\right)
,u-v\right)   &  =\kappa\left\vert u-v\right\vert ^{2}\pm\left(
Au-Av,u-v\right) \\
&  \geq\kappa\left\vert u-v\right\vert ^{2}-\left\vert Au-Av\right\vert
\left\vert u-v\right\vert \\
&  =\left\vert u-v\right\vert \left(  \kappa\left\vert u-v\right\vert
-\left\vert Au-Av\right\vert \right)  \overset{\text{(\ref{App1})}}{\geq
}0\quad\forall u,v\in H.
\end{align*}%
\endproof

Let $A:\mathbb{R}\rightarrow\mathbb{R}$ satisfying (\ref{App1}) and let
$u_{\ast}\in\mathbb{R}$ denote a fixed point $u_{\ast}=Au_{\ast}$. This
equality is equivalent to $\left(  \kappa+1\right)  u_{\ast}=\left(  \kappa
I+A\right)  u_{\ast}$ and, in turn, to
\begin{equation}
u_{\ast}=\frac{1}{\kappa+1}\left(  \kappa I+A\right)  u_{\ast},
\label{Appfixedp2}%
\end{equation}
i.e., finding a fixed point to $A$ is equivalent to find one to
(\ref{Appfixedp2}).

Equation (\ref{Appfixedp2}) induces the fixed point iteration%
\begin{equation}
\left\{
\begin{array}
[c]{ll}%
u_{0} & \text{given,}\\
u_{n+1}=\dfrac{1}{\kappa+1}\left(  \kappa I+A\right)  u_{n} & n=0,1,\ldots.
\end{array}
\right.  \label{fixedpointit2}%
\end{equation}

\begin{lemma}
\label{LemMFI}Let $A:\mathbb{R}\rightarrow\mathbb{R}$ satisfy (\ref{App1}) for
some $\kappa\geq0$ and $u_{\ast}\in\mathbb{R}$ be a fixed point $u_{\ast
}=Au_{\ast}$. We assume that one of the following conditions hold

\begin{enumerate}
\item $u_{0}\leq u_{1}$ and $u_{0}\leq u_{\ast},$

\item $u_{0}\geq u_{1}$ and $u_{0}\geq u_{\ast}.$
\end{enumerate}

Then, the fixed point iteration (\ref{fixedpointit2}) converges.
\end{lemma}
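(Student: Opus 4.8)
The plan is to recognise that the modified iteration map
\[
T(u):=\frac{1}{\kappa+1}\left(\kappa I+A\right)u
\]
from (\ref{fixedpointit2}) is \emph{non-decreasing} on $\mathbb{R}$, and then to invoke the classical argument for monotone fixed-point iterations: a monotone, one-sidedly bounded sequence in $\mathbb{R}$ converges. The whole point of passing from $A$ to $T$ is that Lemma \ref{LemMon} makes this monotonicity available.

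First I would establish that $T$ is non-decreasing. By Lemma \ref{LemMon} the operator $B_{+}=\kappa I+A$ is monotone. In the scalar case $H=\mathbb{R}$ the scalar product is ordinary multiplication, so monotonicity reads $\left(B_{+}u-B_{+}v\right)\left(u-v\right)\geq 0$ for all $u,v$; choosing $u>v$ forces $B_{+}u\geq B_{+}v$. Hence $B_{+}$, and therefore $T=\tfrac{1}{\kappa+1}B_{+}$, is non-decreasing. Moreover $T$ is continuous, inheriting the Lipschitz continuity (\ref{App1}) of $A$, and its fixed points coincide with those of $A$ by the equivalence (\ref{Appfixedp2}); in particular $T(u_{\ast})=u_{\ast}$.

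Next I treat the first hypothesis ($u_{0}\leq u_{1}$ and $u_{0}\leq u_{\ast}$); the second is entirely symmetric, with inequalities reversed. I would show by induction that $\left(u_{n}\right)_{n}$ is non-decreasing and bounded above by $u_{\ast}$. For monotonicity: the base case $u_{0}\leq u_{1}$ holds by assumption, and applying the non-decreasing map $T$ to $u_{n}\leq u_{n+1}$ gives $u_{n+1}=T(u_{n})\leq T(u_{n+1})=u_{n+2}$. For the bound: $u_{0}\leq u_{\ast}$ holds by assumption, and if $u_{n}\leq u_{\ast}$ then $u_{n+1}=T(u_{n})\leq T(u_{\ast})=u_{\ast}$. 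Thus $\left(u_{n}\right)_{n}$ is non-decreasing and bounded above, hence convergent to some limit $L\leq u_{\ast}$. Passing to the limit in $u_{n+1}=T(u_{n})$ and using the continuity of $T$ yields $L=T(L)$, so the limit is a fixed point and the iteration converges.

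Since the substance of the statement is exactly the monotonicity of $T$ supplied by Lemma \ref{LemMon}, I do not expect a genuine obstacle once that is in place: the monotone convergence theorem on $\mathbb{R}$ does the rest. The only point meriting a little care is the correct pairing of the two sign hypotheses, so that the monotonicity of the sequence (from $u_{0}\lessgtr u_{1}$) and the one-sided bound (from $u_{0}\lessgtr u_{\ast}$) reinforce rather than conflict with each other.
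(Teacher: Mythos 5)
Your proof is correct and follows essentially the same route as the paper's: both rest on the monotonicity of $B_{+}=\kappa I+A$ from Lemma \ref{LemMon} to show the iterates form a monotone sequence bounded by $u_{\ast}$, and then invoke monotone convergence in $\mathbb{R}$. The only (harmless) difference is that you derive once that $T$ is order-preserving and use it for both inductions, whereas the paper establishes the bound $u_{n}\leq u_{\ast}$ by a direct Lipschitz computation; your added observation that the limit is a fixed point of $T$ by continuity is a correct bonus beyond what the lemma asserts.
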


%

\proof
We restrict to the case $u_{0}\leq u_{\ast}$ and $u_{0}\leq u_{1}$, while the
convergence proof for the reversed case 2. is verbatim.

First we prove by induction: if $u_{0}\leq u_{\ast}$ then $u_{n}\leq u_{\ast}$
for all $n$. Indeed,%
\begin{align*}
u_{\ast}-u_{n+1}  &  =\frac{1}{\kappa+1}\left(  \kappa\left(  u_{\ast}%
-u_{n}\right)  +A\left(  u_{\ast}\right)  -A\left(  u_{n}\right)  \right) \\
&  \geq\frac{1}{\kappa+1}\left(  \kappa\left(  u_{\ast}-u_{n}\right)
-\kappa\left\vert u_{\ast}-u_{n}\right\vert \right)  .
\end{align*}
By induction we know that $\left\vert u_{\ast}-u_{n}\right\vert =u_{\ast
}-u_{n}$ and $u_{\ast}-u_{n+1}\geq0$ follows.

Secondly, we conclude from $u_{0}\leq u_{1}$ that $u_{n}\leq u_{n+1}$ for all
$n\geq0$: assume by induction that $u_{n-1}\leq u_{n}$. The next iterate is
given for $B=\kappa I+A$ by%
\[
u_{n+1}=\frac{1}{\kappa+1}Bu_{n}.
\]
Since $B$ is monotone we have%
\[
\left(  Bu_{n}-Bu_{n-1}\right)  \left(  u_{n}-u_{n-1}\right)  \geq0.
\]
The induction assumption leads to $Bu_{n}\geq Bu_{n-1}$ and, in turn, to%
\[
u_{n+1}=\frac{1}{\kappa+1}Bu_{n}\geq\frac{1}{\kappa+1}Bu_{n-1}=u_{n}.
\]
We have proved that $\left(  u_{n}\right)  _{n}$ is monotonically increasing
and bounded from above by $u_{\ast}$; this implies convergence.%
\endproof

\begin{remark}
For our application it holds $A\left(  \mu\right)  \leftarrow G\left(
\mu\right)  =\ell\left(  u_{\mu}\right)  $ or $A\left(  \mu\right)  \leftarrow
G_{S}\left(  \mu\right)  =\ell\left(  u_{S,\mu}\right)  $. Hence, the
iteration (\ref{Appfixedp2}) is converging if the Lipschitz constant $\kappa$
in (\ref{App1}) (or an upper bound thereof) is known and one of the two cases
stated in Lemma \ref{LemMFI} hold.
\end{remark}

\eject

\end{document}